\providecommand{\U}[1]{\protect\rule{.1in}{.1in}}
\newtheorem{theorem}{Theorem}
\newtheorem{corollary}[theorem]{Corollary}
\newtheorem{lemma}[theorem]{Lemma}
\newtheorem{proposition}[theorem]{Proposition}
\newenvironment{proof}[1][Proof]{\noindent\textbf{#1.} }{\ \rule{0.5em}{0.5em}}
\begin{document}

\title{On the order derivatives of Bessel functions }
\author{T. M. Dunster\thanks{Department of Mathematics and Statistics, San Diego State
University, San Diego, CA 92182-7720, U.S.A. \textit{email}:
mdunster@mail.sdsu.edu}}
\maketitle

\begin{abstract}
The derivatives with respect to order for the Bessel functions $J_{\nu}(x)$
and $Y_{\nu}(x)$, where $\nu>0$ and $x\neq0$ (real or complex), are studied.
Representations are derived in terms of integrals that involve the products
pairs of Bessel functions, and in turn series expansions are obtained for
these integrals. From the new integral representations for $\partial J_{\nu
}(x)/\partial\nu$ and $\partial Y_{\nu}(x)/\partial\nu$, asymptotic
approximations involving Airy functions are constructed for the case $\nu$
large, which are uniformly valid for $0<x<\infty$.

\end{abstract}

\section{Introduction}

Due to their importance in mathematics and physics, Bessel functions have been
extensively studied in the literature. Their main properties can be found by
perusing various classic textbooks on the subject ([1], [7], [11], [13],
[16]), and, most recently, [14, Chap. 10]. The purpose of this paper is to
study the order derivatives of Bessel functions. The literature is relatively
sparse on the properties of these functions, with none currently existing on
their uniform asymptotic properties. We remark that the order derivatives are
used in the study of the monotonicity with respect to order of Bessel
functions, which in turn has applications in quantum mechanics [9].

For some integral representations see [3] and [12] (the latter for the special
case $\nu=\pm{\frac{1}{2}})$, and for certain series representations see [4],
[8, \S 8.486(1)], [10, \S 9.4.4] and [15]. We also mention that in [5]
modified Bessel functions play a role in the study of the order derivatives of
Legendre functions

In this paper we derive new integral representations for the derivatives with
respect to order of Bessel and Hankel functions (\S 2), new series expansions
for the integrals appearing in these representations (\S 3), and uniform
asymptotic approximations for the order derivatives of $J_{\nu}\left(
x\right)  $ and $Y_{\nu}\left(  x\right)  $ (\S \S 4 and 5).

\section{Integral representations for order derivatives of Bessel functions}

Our first result is as follows.

\begin{proposition}
\textit{For} $\nu>0$, $\left\vert {\arg\left(  z\right)  }\right\vert \leq\pi
$, \textit{and} $z\neq0$
\begin{equation}
\frac{\partial J_{\nu}\left(  z\right)  }{\partial\nu}=\nu\pi Y_{\nu}\left(
z\right)  \int_{0}^{z}{\frac{J_{\nu}^{2}\left(  t\right)  }{t}dt}+\nu\pi
J_{\nu}\left(  z\right)  \int_{z}^{\infty}{\frac{J_{\nu}\left(  t\right)
Y_{\nu}\left(  t\right)  }{t}dt}. \label{eq1}%
\end{equation}
\textit{If z is complex the paths of integration must lie in }$\mathrm{C}%
\backslash\left(  {-\infty,0}\right]  $\textit{\ (except at the end point for
the first integral), and with the end point at infinity in the second integral
on the positive real axis.}
\end{proposition}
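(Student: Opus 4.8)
The plan is to characterize $u(z):=\partial J_\nu(z)/\partial\nu$ as a particular solution of an inhomogeneous Bessel equation and then solve that equation by variation of parameters, the two integrals in \eqref{eq1} being precisely the two quadratures that arise. Since $J_\nu$ satisfies $z^2w''+zw'+(z^2-\nu^2)w=0$, differentiating this identity with respect to $\nu$ (differentiation in $\nu$ commuting with that in $z$) gives
\[
z^2u''+zu'+(z^2-\nu^2)u=2\nu J_\nu(z),
\]
so $u$ solves Bessel's equation with forcing term $2\nu J_\nu(z)/z^2$ in normalized form. First I would record this ODE and note that $J_\nu,Y_\nu$ form a fundamental system for the homogeneous part, with Wronskian $\mathcal{W}\{J_\nu,Y_\nu\}=2/(\pi z)$.

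Next I would apply variation of parameters. With forcing $R=2\nu J_\nu/t^2$, the factor $R/\mathcal{W}$ collapses to $\nu\pi J_\nu/t$, and the standard formula produces a particular solution of the shape $\nu\pi Y_\nu(z)\int^z(J_\nu^2/t)\,dt-\nu\pi J_\nu(z)\int^z(J_\nu Y_\nu/t)\,dt$. The freedom in the two lower limits is exactly the freedom to add an arbitrary combination $c_1J_\nu+c_2Y_\nu$. I would fix the base points so the integrals converge: the origin for the first, where $J_\nu^2/t\sim\mathrm{const}\cdot t^{2\nu-1}$ is integrable precisely because $\nu>0$; and $+\infty$ for the second, reversing its orientation to match the $+\int_z^\infty$ in \eqref{eq1}, convergence there holding because $J_\nu Y_\nu/t$ is $O(t^{-2})$ and oscillatory. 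This yields the right-hand side of \eqref{eq1} up to the undetermined homogeneous term $c_1J_\nu+c_2Y_\nu$.

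The crux is showing $c_1=c_2=0$, and this is where I expect the real work. I would match large-$z$ asymptotics, the cleanest route since both integrals are then in their simplest regime. On one hand, differentiating the leading form $J_\nu(z)\sim(2/\pi z)^{1/2}\cos(z-\tfrac12\nu\pi-\tfrac14\pi)$ in $\nu$ (only the phase depends on $\nu$ at this order) gives $u(z)\sim\tfrac12\pi Y_\nu(z)$. On the other hand, using the classical value $\int_0^\infty(J_\nu^2(t)/t)\,dt=1/(2\nu)$, the first term of the candidate right-hand side tends to $\nu\pi Y_\nu(z)\cdot 1/(2\nu)=\tfrac12\pi Y_\nu(z)$, while the second term is $O(z^{-5/2})$ and negligible; hence the candidate right-hand side is also $\sim\tfrac12\pi Y_\nu(z)$. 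Thus the difference $c_1J_\nu+c_2Y_\nu$ has vanishing leading $O(z^{-1/2})$ asymptotics in both its $\cos$ and $\sin$ components, which forces $c_1=c_2=0$. (One could instead match at the origin: absence of a $(z/2)^{-\nu}$ term kills $c_2$ and the $(z/2)^\nu\ln(z/2)$ coefficients already agree, but pinning $c_1$ that way needs a delicate constant-term computation, so I prefer the endpoint at infinity.)

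Finally I would pass from $0<z<\infty$ to complex $z$ with $|\arg z|\le\pi$ by analytic continuation. Both sides of \eqref{eq1} are analytic on $\mathbb{C}\setminus(-\infty,0]$: the functions $J_\nu,Y_\nu$ are, and by Cauchy's theorem each integral depends only on its endpoints as long as the path stays in the cut plane, where $J_\nu^2/t$ and $J_\nu Y_\nu/t$ are holomorphic. Since the two analytic functions agree on the positive real axis, they agree throughout the cut plane. The stated path restrictions are exactly the hypotheses this needs: the first path may terminate at $0$ only because $\nu>0$ makes the integrand integrable there, and the second must run to $+\infty$ along the positive real axis because that is the only direction in which $J_\nu(t)Y_\nu(t)$ decays, off the axis the Bessel functions grow exponentially and the tail diverges. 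The main obstacle throughout is the bookkeeping that fixes $c_1$ and $c_2$; the rest is routine quadrature and continuation.
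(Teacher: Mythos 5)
Your proposal is correct and follows essentially the same route as the paper: differentiate Bessel's equation in $\nu$, solve by variation of parameters with Wronskian $2/(\pi z)$, and fix the homogeneous part by asymptotic matching (the paper kills the $Y_{\nu}$-coefficient via boundedness as $z\rightarrow 0^{+}$ rather than at infinity, a cosmetic difference). The one step you assert without justification --- that $\partial J_{\nu}(z)/\partial\nu\sim(\pi/2z)^{1/2}\sin\left(z-\tfrac{1}{2}\nu\pi-\tfrac{1}{4}\pi\right)$ follows by differentiating the asymptotic expansion of $J_{\nu}$ term-by-term in $\nu$ --- is precisely the point the paper flags as formal and verifies by a saddle-point argument applied to the Schl\"afli-type contour integral for $\partial J_{\nu}/\partial\nu$, so that justification should be supplied.
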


\begin{proof}
For convenience, here and throughout we denote
\[
\hat{{J}}_{\nu}\left(  z\right)  =\partial J_{\nu}\left(  z\right)
/\partial\nu,\ \hat{{Y}}_{\nu}\left(  z\right)  =\partial Y_{\nu}\left(
z\right)  /\partial\nu.
\]
Then, from the $\nu$ derivative of Bessel's equation, we have
\begin{equation}
z^{2}\frac{d^{2}\hat{{J}}_{\nu}\left(  z\right)  }{dz^{2}}+z\frac{d\hat{{J}%
}_{\nu}\left(  z\right)  }{dz}+\left(  {z^{2}-\nu^{2}}\right)  \hat{{J}}_{\nu
}\left(  z\right)  =2\nu J_{\nu}\left(  z\right)  . \label{eq3}%
\end{equation}
We then apply the method of variation of parameters, to obtain
\begin{equation}%
\begin{array}
[c]{l}%
\hat{{J}}_{\nu}\left(  z\right)  =\nu\pi Y_{\nu}\left(  z\right)  \int_{0}%
^{z}{t^{-1}J_{\nu}^{2}\left(  t\right)  dt}+\nu\pi J_{\nu}\left(  z\right)
\int_{z}^{\infty}{t^{-1}J_{\nu}\left(  t\right)  Y_{\nu}\left(  t\right)
dt}\\
+c_{\nu}J_{\nu}\left(  z\right)  +d_{\nu}Y_{\nu}\left(  z\right)  ,
\end{array}
\label{eq4}%
\end{equation}
for certain constants $c_{\nu}$ and $d_{\nu}$. Recalling the behavior of
Bessel functions as $z\rightarrow0_{\,}^{+}$(see for example, [14, Eqs.
(10.7.3, (10.7.4) and (10.15.1)]), we observe that the LHS is bounded, whereas
the RHS is bounded if and only if $d_{\nu}=0$, which we conclude must be the case.

To determine $c_{\nu}$ we compare both sides as $z\rightarrow\infty$. To do
so, we use the well-known approximations [14, \S 10.17(i)]
\begin{equation}%
\begin{array}
[c]{l}%
\left(  {\dfrac{\pi z}{2}}\right)  ^{1/2}J_{\nu}\left(  z\right)  =\cos\left(
{z-{\frac{1}{2}}\nu\pi-{\frac{1}{4}}\pi}\right)  \left\{  {1-\dfrac{\left(
{4\nu^{2}-1}\right)  \left(  {4\nu^{2}-9}\right)  }{128z^{2}}}\right\} \\
-\sin\left(  {z-{\frac{1}{2}}\nu\pi-{\frac{1}{4}}\pi}\right)  \dfrac{4\nu
^{2}-1}{8z}+O\left(  {\dfrac{1}{z^{3}}}\right)  ,
\end{array}
\label{eq5}%
\end{equation}
and
\begin{equation}%
\begin{array}
[c]{l}%
\left(  {\dfrac{\pi z}{2}}\right)  ^{1/2}Y_{\nu}\left(  z\right)  =\sin\left(
{z-{\frac{1}{2}}\nu\pi-{\frac{1}{4}}\pi}\right)  \left\{  {1-\dfrac{\left(
{4\nu^{2}-1}\right)  \left(  {4\nu^{2}-9}\right)  }{128z^{2}}}\right\} \\
+\cos\left(  {z-{\frac{1}{2}}\nu\pi-{\frac{1}{4}}\pi}\right)  \dfrac{4\nu
^{2}-1}{8z}+O\left(  {\dfrac{1}{z^{3}}}\right)  .
\end{array}
\label{eq6}%
\end{equation}
Taking the $\nu$ derivative of (\ref{eq5}), we have (formally) for the LHS of
(\ref{eq4})
\begin{equation}
\hat{{J}}_{\nu}\left(  z\right)  \sim\left(  {\frac{\pi}{2z}}\right)
^{1/2}\sin\left(  {z-{\frac{1}{2}}\nu\pi-{\frac{1}{4}}\pi}\right)
\quad\left(  {z\rightarrow\infty}\right)  . \label{eq7}%
\end{equation}
This can be verified, for example, by a straightforward modification of the
saddle point method found in [13, Chap. 4, \S 9.1] to the integral
representation
\[
\hat{{J}}_{\nu}\left(  z\right)  =-\frac{1}{2\pi i}\int_{-\infty+\pi
i}^{-\infty-\pi i}{t\exp\left\{  {-z\sinh\left(  t\right)  +\nu t}\right\}
dt}\quad\left(  {\left\vert {\arg z}\right\vert <{\frac{1}{2}}\pi}\right)  .
\]
On the other hand, from (\ref{eq5}) and (\ref{eq6}), we find the RHS of
(\ref{eq4}) takes the form
\begin{equation}
\left(  {\frac{\pi}{2z}}\right)  ^{1/2}\sin\left(  {z-{\frac{1}{2}}\nu
\pi-{\frac{1}{4}}\pi}\right)  +c_{\nu}\left(  {\frac{2}{\pi z}}\right)
^{1/2}\cos\left(  {z-{\frac{1}{2}}\nu\pi-{\frac{1}{4}}\pi}\right)  +O\left(
{\frac{1}{z^{3/2}}}\right)  . \label{eq9}%
\end{equation}
In arriving at this we have used [16, p. 405]
\begin{equation}
\int_{0}^{\infty}{\frac{J_{\nu}^{2}\left(  t\right)  }{t}dt}=\frac{1}{2\nu}.
\label{eq10}%
\end{equation}
Comparing (\ref{eq7}) with (\ref{eq9}) we deduce that $c_{\nu}=0$, and the
result (\ref{eq1}) follows.
\end{proof}

\begin{proposition}
Under the same conditions as Proposition 1.1
\begin{equation}
\frac{\partial Y_{\nu}\left(  z\right)  }{\partial\nu}=\nu\pi J_{\nu}\left(
z\right)  \int_{z}^{\infty}{\frac{Y_{\nu}^{2}\left(  t\right)  }{t}dt}-\nu\pi
Y_{\nu}\left(  z\right)  \int_{z}^{\infty}{\frac{J_{\nu}\left(  t\right)
Y_{\nu}\left(  t\right)  }{t}dt}-\frac{1}{2}\pi J_{\nu}\left(  z\right)  .
\label{eq11}%
\end{equation}

\end{proposition}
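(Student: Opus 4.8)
The plan is to mirror the argument for Proposition 1.1, the only essential change being how the two free constants are pinned down. Since $Y_{\nu}(z)$ satisfies Bessel's equation, differentiating that equation with respect to $\nu$ shows that $\hat{Y}_{\nu}(z)=\partial Y_{\nu}(z)/\partial\nu$ solves the same inhomogeneous equation as in (\ref{eq3}), namely $z^{2}\hat{Y}_{\nu}''+z\hat{Y}_{\nu}'+(z^{2}-\nu^{2})\hat{Y}_{\nu}=2\nu Y_{\nu}(z)$. I would then apply variation of parameters using the fundamental pair $\{J_{\nu},Y_{\nu}\}$ with Wronskian $W\{J_{\nu},Y_{\nu}\}=2/(\pi z)$. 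This produces a particular solution whose two integral terms are exactly $\nu\pi J_{\nu}(z)\int Y_{\nu}^{2}(t)/t\,dt$ and $-\nu\pi Y_{\nu}(z)\int J_{\nu}(t)Y_{\nu}(t)/t\,dt$, up to the choice of integration limits, together with a complementary term $c_{\nu}J_{\nu}(z)+d_{\nu}Y_{\nu}(z)$. Because $Y_{\nu}^{2}(t)/t\sim t^{-2\nu-1}$ as $t\rightarrow0^{+}$ the lower limit cannot be taken at the origin, whereas both integrands decay like $t^{-2}$ (with oscillation) at infinity; this forces the choice $\int_{z}^{\infty}$ in both integrals, which is precisely the form asserted in (\ref{eq11}).

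The crucial difference from Proposition 1.1 is that $\hat{Y}_{\nu}(z)$ is unbounded as $z\rightarrow0^{+}$, so the vanishing of the $Y_{\nu}$ coefficient can no longer be read off from boundedness at the origin. Instead I would determine both $c_{\nu}$ and $d_{\nu}$ from the behavior as $z\rightarrow\infty$. Writing $\chi=z-\tfrac{1}{2}\nu\pi-\tfrac{1}{4}\pi$ and differentiating (\ref{eq6}) with respect to $\nu$ (using $\partial\chi/\partial\nu=-\tfrac{1}{2}\pi$) gives the leading behavior $\hat{Y}_{\nu}(z)\sim-(\pi/(2z))^{1/2}\cos(\chi)$ as $z\rightarrow\infty$. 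As in the proof of Proposition 1.1 this formal differentiation can be justified rigorously, for instance by applying the saddle point method to an integral representation for $\hat{Y}_{\nu}(z)$ analogous to the one used for $\hat{J}_{\nu}(z)$.

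It then remains to expand the right-hand side of the variation-of-parameters formula as $z\rightarrow\infty$. Using the leading approximations $J_{\nu}(t)Y_{\nu}(t)/t\sim(1/\pi t^{2})\sin(2\chi_{t})$ and $Y_{\nu}^{2}(t)/t\sim(1/\pi t^{2})(1-\cos(2\chi_{t}))$, where $\chi_{t}=t-\tfrac{1}{2}\nu\pi-\tfrac{1}{4}\pi$, one finds $\int_{z}^{\infty}Y_{\nu}^{2}(t)/t\,dt\sim1/(\pi z)$ and $\int_{z}^{\infty}J_{\nu}(t)Y_{\nu}(t)/t\,dt=O(z^{-2})$, the oscillatory pieces being of higher order by integration by parts. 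Hence the two integral terms contribute only at $O(z^{-3/2})$, and the dominant $O(z^{-1/2})$ part of the right-hand side is $c_{\nu}J_{\nu}(z)+d_{\nu}Y_{\nu}(z)$. Matching the $\cos(\chi)$ and $\sin(\chi)$ components of this against the leading behavior of $\hat{Y}_{\nu}(z)$ separately then yields $c_{\nu}=-\tfrac{1}{2}\pi$ and $d_{\nu}=0$, and (\ref{eq11}) follows. The main obstacle I anticipate is the careful justification that the complementary constants are genuinely fixed at leading order alone, which requires controlling the oscillatory integral tails and confirming that the term-by-term $\nu$-differentiation of the asymptotic expansion is legitimate.
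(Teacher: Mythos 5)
Your proposal is correct, but it takes a genuinely different route from the paper. You prove (\ref{eq11}) the same way the paper proves Proposition 1.1: differentiate Bessel's equation in $\nu$ to get $z^{2}\hat{Y}_{\nu}''+z\hat{Y}_{\nu}'+(z^{2}-\nu^{2})\hat{Y}_{\nu}=2\nu Y_{\nu}(z)$, apply variation of parameters with the pair $\{J_{\nu},Y_{\nu}\}$ and Wronskian $2/(\pi z)$, and fix $c_{\nu}=-\tfrac{1}{2}\pi$, $d_{\nu}=0$ by matching against $\hat{Y}_{\nu}(z)\sim -(\pi/(2z))^{1/2}\cos\left(z-\tfrac{1}{2}\nu\pi-\tfrac{1}{4}\pi\right)$ at infinity; your order estimates $\int_{z}^{\infty}t^{-1}Y_{\nu}^{2}(t)\,dt\sim 1/(\pi z)$ and $\int_{z}^{\infty}t^{-1}J_{\nu}(t)Y_{\nu}(t)\,dt=O(z^{-2})$ check out (the latter is the paper's (\ref{eq17})), so both integral terms are indeed $O(z^{-3/2})$ and only the complementary solution survives at leading order. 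The paper instead starts from the indefinite Lommel-type integral (\ref{eq12}) for $\int z^{-1}\mathcal{C}_{\mu}\mathcal{D}_{\nu}\,dz$, passes to the limit $\mu\rightarrow\nu$ to obtain the closed forms (\ref{eq16}), (\ref{eq18}), (\ref{eq19}) expressing $\int_{z}^{\infty}t^{-1}J_{\nu}Y_{\nu}\,dt$ and $\int_{z}^{\infty}t^{-1}Y_{\nu}^{2}\,dt$ in terms of order derivatives, and then eliminates $\partial Y_{\nu+1}/\partial\nu$ between (\ref{eq18}) and (\ref{eq19}) using (\ref{eq13}). What the paper's route buys is the intermediate identities (\ref{eq16})--(\ref{eq19}) themselves, which are reused later (e.g.\ in deriving (\ref{eq24})); what your route buys is uniformity with Proposition 1.1 and a shorter path to (\ref{eq11}). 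Note that the burden you flag --- rigorously justifying the termwise $\nu$-differentiation of (\ref{eq6}) --- is not actually an extra cost relative to the paper, since the paper's determination of $C_{0}=1/(2\nu)$ in (\ref{eq19}) already requires the large-$z$ behavior of $\hat{Y}_{\nu}$; a Schl\"afli-type contour integral for $Y_{\nu}$ handles it exactly as for $\hat{J}_{\nu}$.
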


\begin{proof}
From [14, Eq. (10.22.6)] we have, for $\mu\neq\pm\nu$,
\begin{equation}
\int{\frac{\mathcal{C}_{\mu}\left(  z\right)  \mathcal{D}_{\nu}\left(
z\right)  }{z}dz}=\frac{z\left\{  \mathcal{C}{_{\mu}\left(  z\right)
\mathcal{D}_{\nu+1}\left(  z\right)  -\mathcal{C}_{\mu+1}\left(  z\right)
\mathcal{D}_{\nu}\left(  z\right)  }\right\}  }{\left(  {\mu^{2}-\nu^{2}%
}\right)  }+\frac{\mathcal{C}_{\mu}\left(  z\right)  \mathcal{D}_{\nu}\left(
z\right)  }{\mu+\nu}+C, \label{eq12}%
\end{equation}
where $C$ is some constant. We first choose $\mathcal{C}=J$ and $\mathcal{D}%
=Y$, and we shall select $C$ so that the RHS is finite as $\mu\rightarrow\nu$.
To do so, note from [14, Eq. (10.5.2)]
\begin{equation}
J_{\nu+1}\left(  z\right)  Y_{\nu}\left(  z\right)  -J_{\nu}\left(  z\right)
Y_{\nu+1}\left(  z\right)  =2/\left(  {\pi z}\right)  . \label{eq13}%
\end{equation}
Thus the choice
\[
C=\frac{2}{\pi\left(  {\mu^{2}-\nu^{2}}\right)  }+C_{0}=\frac{z\left\{
{J_{\nu+1}\left(  z\right)  Y_{\nu}\left(  z\right)  -J_{\nu}\left(  z\right)
Y_{\nu+1}\left(  z\right)  }\right\}  }{\mu^{2}-\nu^{2}}+C_{0},
\]
ensures that the RHS of (\ref{eq12}) is indeed finite as $\mu\rightarrow\nu$,
for any bounded $C_{0}$. Thus we can assert that
\begin{equation}%
\begin{array}
[c]{l}%
\int_{z}^{\infty}{\dfrac{J_{\mu}\left(  t\right)  Y_{\nu}\left(  t\right)
}{t}dt}=-\dfrac{zY_{\nu+1}\left(  z\right)  }{\mu+\nu}\left(  {\dfrac{J_{\mu
}\left(  z\right)  -J_{\nu}\left(  z\right)  }{\mu-\nu}}\right) \\
+\dfrac{zY_{\nu}\left(  z\right)  }{\mu+\nu}\left(  {\dfrac{J_{\mu+1}\left(
z\right)  -J_{\nu+1}\left(  z\right)  }{\mu-\nu}}\right)  -\dfrac{J_{\mu
}\left(  z\right)  Y_{\nu}\left(  z\right)  }{\mu+\nu}+C_{0}.
\end{array}
\label{eq15}%
\end{equation}
where $C_{0}$ is now a specific constant which must be determined. To do so,
we let $\mu\rightarrow\nu$, and (\ref{eq15}) then becomes
\begin{equation}
\int_{z}^{\infty}{\frac{J_{\nu}\left(  t\right)  Y_{\nu}\left(  t\right)  }%
{t}dt}=\frac{z}{2\nu}\left[  {\frac{\partial J_{\nu+1}\left(  z\right)
}{\partial\nu}Y_{\nu}\left(  z\right)  -\frac{\partial J_{\nu}\left(
z\right)  }{\partial\nu}Y_{\nu+1}\left(  z\right)  }\right]  -\frac{J_{\nu
}\left(  z\right)  Y_{\nu}\left(  z\right)  }{2\nu}+C_{0}. \label{eq16}%
\end{equation}
Letting $z\rightarrow\infty$ with $\nu$ fixed, we find with the aid of
(\ref{eq5}) and (\ref{eq6}) that $C_{0}=0$. Moreover, it is seen that
\begin{equation}
\int_{z}^{\infty}{\frac{J_{\nu}\left(  t\right)  Y_{\nu}\left(  t\right)  }%
{t}dt}=\frac{\sin\left(  {2z-\nu\pi}\right)  }{2\pi z^{2}}+O\left(  {\frac
{1}{z^{3}}}\right)  \quad\left(  {z\rightarrow\infty}\right)  , \label{eq17}%
\end{equation}
for each fixed $\nu$.

We next interchange $J$ and $Y$ in (\ref{eq16}), to obtain
\begin{equation}
\int_{z}^{\infty}{\frac{J_{\nu}\left(  t\right)  Y_{\nu}\left(  t\right)  }%
{t}dt}=\frac{z}{2\nu}\left[  {\frac{\partial Y_{\nu+1}\left(  z\right)
}{\partial\nu}J_{\nu}\left(  z\right)  -\frac{\partial Y_{\nu}\left(
z\right)  }{\partial\nu}J_{\nu+1}\left(  z\right)  }\right]  -\frac{J_{\nu
}\left(  z\right)  Y_{\nu}\left(  z\right)  }{2\nu}. \label{eq18}%
\end{equation}
We can obtain a similar expression by replacing $J$ by $Y$ in (\ref{eq16}),
noting that the value of $C_{0}$ may differ. In fact, from comparing both
sides of (\ref{eq16}) as $z\rightarrow\infty$ and referring to (\ref{eq5}) and
(\ref{eq6}) we find this time that $C_{0}=1/\left(  {2\nu}\right)  $.
Consequently, we arrive at
\begin{equation}
\int_{z}^{\infty}{\frac{Y_{\nu}^{2}\left(  t\right)  }{t}dt}=\frac{z}{2\nu
}\left[  {\frac{\partial Y_{\nu+1}\left(  z\right)  }{\partial\nu}Y_{\nu
}\left(  z\right)  -\frac{\partial Y_{\nu}\left(  z\right)  }{\partial\nu
}Y_{\nu+1}\left(  z\right)  }\right]  -\frac{Y_{\nu}^{2}\left(  z\right)
-1}{2\nu}. \label{eq19}%
\end{equation}
Finally, eliminating $\partial Y_{\nu+1}\left(  x\right)  /\partial\nu$ from
(\ref{eq18}) and (\ref{eq19}), and using (\ref{eq13}), yields (\ref{eq11})
\end{proof}

Using the behavior of Hankel functions at infinity, and in a similar manner to
the proof of (\ref{eq1}), one can show that the following result holds.
Details of the proof are left to the reader.

\begin{lemma}
\textit{For} $\nu>0$ \textit{and} $z\neq0$
\begin{equation}%
\begin{array}
[c]{l}%
\dfrac{\partial H_{\nu}^{\left(  1\right)  }\left(  z\right)  }{\partial\nu
}={\dfrac{1}{2}}\pi i\left[  {\nu H_{\nu}^{\left(  1\right)  }\left(
z\right)  \int_{z}^{\infty}{t^{-1}H_{\nu}^{\left(  1\right)  }\left(
t\right)  H_{\nu}^{\left(  2\right)  }\left(  t\right)  dt}}\right. \\
\left.  {-\nu H_{\nu}^{\left(  2\right)  }\left(  z\right)  \int_{z}^{\infty
}{t^{-1}\left\{  {H_{\nu}^{\left(  1\right)  }\left(  t\right)  }\right\}
^{2}dt}-H_{\nu}^{\left(  1\right)  }\left(  z\right)  }\right]  .
\end{array}
\label{eq20}%
\end{equation}
Likewise, for $\partial H_{\nu}^{\left(  2\right)  }\left(  z\right)
/\partial\nu$, in (\ref{eq20}) interchange the superscripts (1) and (2) and
replace $i$ by $-i$ throughout.
\end{lemma}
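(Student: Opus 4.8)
The plan is to follow the proof of Proposition 1.1 verbatim, but with the Hankel pair replacing $J_{\nu},Y_{\nu}$. First I would differentiate Bessel's equation with respect to $\nu$ to obtain, writing $\hat{H}_{\nu}^{(1)}(z)=\partial H_{\nu}^{(1)}(z)/\partial\nu$, the inhomogeneous equation $z^{2}\,d^{2}\hat{H}_{\nu}^{(1)}/dz^{2}+z\,d\hat{H}_{\nu}^{(1)}/dz+(z^{2}-\nu^{2})\hat{H}_{\nu}^{(1)}=2\nu H_{\nu}^{(1)}(z)$, identical in form to (\ref{eq3}). The natural pair of homogeneous solutions is now $H_{\nu}^{(1)}$ and $H_{\nu}^{(2)}$, whose Wronskian is $-4i/(\pi z)$.

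Applying variation of parameters with this pair, I would arrive at a particular solution $u_{1}H_{\nu}^{(1)}+u_{2}H_{\nu}^{(2)}$ with $u_{1}(z)=\tfrac{1}{2}i\nu\pi\int_{z}^{\infty}t^{-1}H_{\nu}^{(1)}(t)H_{\nu}^{(2)}(t)\,dt$ and $u_{2}(z)=-\tfrac{1}{2}i\nu\pi\int_{z}^{\infty}t^{-1}\{H_{\nu}^{(1)}(t)\}^{2}\,dt$. I place the endpoint at $\infty$ precisely so that both integrals converge: since $H_{\nu}^{(1)}H_{\nu}^{(2)}=J_{\nu}^{2}+Y_{\nu}^{2}\sim 2/(\pi t)$ the first integrand is $O(t^{-2})$, while $\{H_{\nu}^{(1)}\}^{2}$ carries an oscillatory factor $e^{2it}$ whose integral is again $O(t^{-2})$ after one integration by parts. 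This yields $\hat{H}_{\nu}^{(1)}(z)=\tfrac{1}{2}i\nu\pi H_{\nu}^{(1)}(z)\int_{z}^{\infty}t^{-1}H_{\nu}^{(1)}H_{\nu}^{(2)}\,dt-\tfrac{1}{2}i\nu\pi H_{\nu}^{(2)}(z)\int_{z}^{\infty}t^{-1}\{H_{\nu}^{(1)}\}^{2}\,dt+c_{\nu}H_{\nu}^{(1)}(z)+d_{\nu}H_{\nu}^{(2)}(z)$ for constants $c_{\nu},d_{\nu}$.

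It remains to determine $c_{\nu}$ and $d_{\nu}$, and here the argument must depart from Proposition 1.1: both Hankel functions blow up as $z\to0$, so boundedness at the origin is useless and I must instead match as $z\to\infty$. Differentiating the Hankel asymptotic $H_{\nu}^{(1)}(z)\sim(2/\pi z)^{1/2}\exp\{i(z-\tfrac{1}{2}\nu\pi-\tfrac{1}{4}\pi)\}$ in $\nu$ gives $\hat{H}_{\nu}^{(1)}(z)\sim-\tfrac{1}{2}i\pi H_{\nu}^{(1)}(z)$, and more importantly shows that the entire asymptotic expansion of $\hat{H}_{\nu}^{(1)}$ is of recessive ($e^{iz}$) type, with no dominant ($e^{-iz}$) component. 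On the right-hand side the first integral term is also $e^{iz}$-type, while in the second the factor $e^{2it}$ inside $\{H_{\nu}^{(1)}\}^{2}$ combines with $H_{\nu}^{(2)}\sim e^{-iz}$ to produce yet another $e^{iz}$-type contribution; hence the particular solution contains no $e^{-iz}$ part either. Matching the (absent) $e^{-iz}$ terms therefore forces $d_{\nu}=0$, and matching the leading $e^{iz}$ coefficient — $-\tfrac{1}{2}i\pi$ against $c_{\nu}$, the integral contributions being smaller by $O(z^{-1})$ — gives $c_{\nu}=-\tfrac{1}{2}i\pi$. Inserting these constants reproduces (\ref{eq20}), and the companion formula for $\partial H_{\nu}^{(2)}/\partial\nu$ then follows from the identical construction with the two Hankel solutions interchanged and $i$ replaced by $-i$.

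The delicate point, which I expect to be the main obstacle, is isolating $c_{\nu}$ (the dominant $e^{iz}$ coefficient) from $d_{\nu}$ (the recessive $e^{-iz}$ one), because on the positive real axis $H_{\nu}^{(1)}$ and $H_{\nu}^{(2)}$ are both $O(z^{-1/2})$ and merely oscillate, so neither genuinely dominates. I would resolve this by temporarily taking $\arg z\in(0,\pi)$, where $H_{\nu}^{(1)}$ is exponentially recessive and $H_{\nu}^{(2)}$ exponentially dominant: there $d_{\nu}=0$ follows rigorously from the fact that $\hat{H}_{\nu}^{(1)}$, being a limit of difference quotients of the recessive solutions $H_{\nu}^{(1)}$, is itself recessive and so cannot acquire the growing solution $H_{\nu}^{(2)}$. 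With $c_{\nu}$ and $d_{\nu}$ fixed in the upper half-plane, the identity extends to the full range $z\neq0$ by analytic continuation.
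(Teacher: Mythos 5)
Your proposal is correct and follows precisely the route the paper intends: the paper gives no written proof of this lemma, stating only that it follows ``in a similar manner to the proof of (\ref{eq1})'' using the behavior of Hankel functions at infinity, which is exactly your variation-of-parameters argument with the pair $H_{\nu}^{(1)},H_{\nu}^{(2)}$ (Wronskian $-4i/(\pi z)$) and determination of $c_{\nu}=-\tfrac{1}{2}\pi i$, $d_{\nu}=0$ by matching as $z\to\infty$. Your extra care in rotating into the upper half-plane to make $H_{\nu}^{(1)}$ genuinely recessive is a sound way to supply the details the paper leaves to the reader.
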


\section{Series expansions for integrals involving products of Bessel
functions}

Here we obtain a number of series expansions for the integrals appearing in
\S 2. While power series expansions exist for $\hat{{J}}_{\nu}\left(
z\right)  $ and $\hat{{Y}}_{\nu}\left(  z\right)  $ themselves (see [14,
\S 10.15], these are only useful for small $z$, or for special values of $\nu
$. Also, expansions of the integrals themselves is of importance in scattering
theory, as noted below.

Our results appear to be new, and are complementary to certain results
appearing in the literature. In particular, from [16, p. 152], we have the
following expansion.

\begin{lemma}
\textit{For positive x and }$\nu$
\begin{equation}
\int_{0}^{x}{\frac{J_{\nu}^{2}\left(  t\right)  }{t}dt}=\frac{1}{2\nu}%
\sum\limits_{n=0}^{\infty}{\varepsilon_{n}J_{\nu+n}^{2}\left(  x\right)  },
\label{eq21}%
\end{equation}
\textit{where }$\varepsilon_{0}=1$\textit{\ and }$\varepsilon_{n}%
=2$\textit{\ otherwise.}
\end{lemma}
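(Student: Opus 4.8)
The plan is to show that both sides of (\ref{eq21}) vanish as $x\rightarrow0^{+}$ and share the same $x$-derivative on $\left(  0,\infty\right)  $; the identity then follows by integration. Write the right-hand side as $S\left(  x\right)  =\left(  2\nu\right)  ^{-1}\sum_{n=0}^{\infty}\varepsilon_{n}J_{\nu+n}^{2}\left(  x\right)  $. First I would record the two standard recurrences (see [14, \S 10.6])
\[
J_{\mu-1}\left(  x\right)  +J_{\mu+1}\left(  x\right)  =\frac{2\mu}{x}J_{\mu
}\left(  x\right)  ,\qquad J_{\mu-1}\left(  x\right)  -J_{\mu+1}\left(
x\right)  =2J_{\mu}^{\prime}\left(  x\right)  ,
\]
and observe that for each fixed $x>0$ the large-order estimate $J_{\nu+n}\left(  x\right)  =O\left(  \left(  x/2\right)  ^{\nu+n}/\Gamma\left(  \nu+n+1\right)  \right)  $ makes both the series defining $S\left(  x\right)  $ and the series obtained by differentiating it termwise absolutely and locally uniformly convergent, so that termwise differentiation is legitimate.

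Differentiating termwise and applying the second recurrence gives
\[
S^{\prime}\left(  x\right)  =\frac{1}{2\nu}\sum_{n=0}^{\infty}\varepsilon
_{n}J_{\nu+n}\left(  x\right)  \left[  J_{\nu+n-1}\left(  x\right)  -J_{\nu
+n+1}\left(  x\right)  \right]  .
\]
Setting $p_{k}=J_{\nu+k}\left(  x\right)  J_{\nu+k+1}\left(  x\right)  $, the $n=0$ term contributes $p_{-1}-p_{0}$, while the term with index $n\geq1$ contributes $2\left(  p_{n-1}-p_{n}\right)  $. The tail $\sum_{n\geq1}2\left(  p_{n-1}-p_{n}\right)  $ telescopes to $2p_{0}$ (the remainder $p_{N}\rightarrow0$ as $N\rightarrow\infty$ by the same large-order bound), so the entire sum collapses to $p_{-1}+p_{0}=J_{\nu}\left(  x\right)  \left[  J_{\nu-1}\left(  x\right)  +J_{\nu+1}\left(  x\right)  \right]  $. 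Invoking the first recurrence then yields $S^{\prime}\left(  x\right)  =\left(  2\nu\right)  ^{-1}\cdot\left(  2\nu/x\right)  J_{\nu}^{2}\left(  x\right)  =J_{\nu}^{2}\left(  x\right)  /x$, which is exactly the $x$-derivative of the left-hand side of (\ref{eq21}).

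Finally I would verify the boundary condition: as $x\rightarrow0^{+}$ each $J_{\nu+n}\left(  x\right)  \rightarrow0$ since $\nu>0$, whence $S\left(  x\right)  \rightarrow0$; correspondingly the left-hand integral is well defined and vanishes, because $J_{\nu}^{2}\left(  t\right)  /t\sim t^{2\nu-1}/\left[  2^{2\nu}\Gamma\left(  \nu+1\right)  ^{2}\right]  $ is integrable at the origin precisely when $\nu>0$. Since the two antiderivatives agree at $x=0$ and have equal derivatives, they coincide, establishing (\ref{eq21}). The only point needing genuine care is the convergence bookkeeping—justifying termwise differentiation and the vanishing of the telescoped remainder $p_{N}$—rather than the algebra, which is a short computation once the two recurrences are in hand. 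As a consistency check one may let $x\rightarrow\infty$, where $S\left(  x\right)  \rightarrow\left(  2\nu\right)  ^{-1}$ in agreement with (\ref{eq10}).
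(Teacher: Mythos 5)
Your proof is correct, and it is worth noting that the paper itself offers no proof of this lemma: it is simply quoted from Watson [16, p.~152], so there is no in-paper argument to compare against. Your route --- differentiate the series termwise, use $2J_{\mu}'=J_{\mu-1}-J_{\mu+1}$ to telescope the resulting sum of products $p_{k}=J_{\nu+k}J_{\nu+k+1}$ down to $p_{-1}+p_{0}$, apply $J_{\mu-1}+J_{\mu+1}=(2\mu/x)J_{\mu}$ to get $S'(x)=J_{\nu}^{2}(x)/x$, and then match the value $0$ at $x=0^{+}$ --- is essentially the classical derivation of this identity, and the algebra checks out, including the slightly asymmetric treatment of the $n=0$ term forced by $\varepsilon_{0}=1$. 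The convergence bookkeeping you flag is indeed the only delicate point, and your bound $J_{\nu+n}(x)=O\bigl((x/2)^{\nu+n}/\Gamma(\nu+n+1)\bigr)$ handles all of it: it gives absolute and locally uniform convergence of the differentiated series (justifying termwise differentiation), kills the telescoped remainder $p_{N}$, and --- one small point you should make explicit rather than arguing term by term --- it yields $S(x)=O(x^{2\nu})$ as $x\to0^{+}$, since termwise vanishing alone does not give the limit of an infinite sum. Your closing consistency check against (\ref{eq10}) is exactly the observation the paper itself makes immediately after stating the lemma, when it rewrites (\ref{eq21}) as an identity for $\int_{x}^{\infty}t^{-1}J_{\nu}^{2}(t)\,dt$.
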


This result was generalized in [6]; in that paper it is shown how series of
this type allows one to write the spherical harmonic component of the Coulomb
kernel $\left\vert {\mathrm{\mathbf{r}}-\mathrm{\mathbf{{r}^{\prime}}}%
}\right\vert ^{-1}$ as a numerically stable series.

The integral in (\ref{eq21}) diverges when $\nu=0$. However, from
(\ref{eq10}), we see that (\ref{eq21}) can be re-expressed in the form
\[
\int_{x}^{\infty}{\frac{J_{\nu}^{2}\left(  t\right)  }{t}dt}=\frac{1}{2\nu
}-\frac{1}{2\nu}\sum\limits_{n=0}^{\infty}{\varepsilon_{n}J_{\nu+n}^{2}\left(
x\right)  }.
\]
The integral here now converges when $\nu=0_{\,}$for each positive $x$, and
limiting value of the RHS can therefore be found as $\nu\rightarrow0$. In
particular, using L'Hopital's rule, we find that
\begin{equation}
\int_{x}^{\infty}{\frac{J_{0}^{2}\left(  t\right)  }{t}dt}=-\sum
\limits_{n=0}^{\infty}{\varepsilon_{n}J_{n}\left(  x\right)  \hat{{J}}%
_{n}\left(  x\right)  }. \label{eq23}%
\end{equation}
To obtain a similar result to (\ref{eq21}) for the integrals appearing in
(\ref{eq1}) and (\ref{eq11}), we shall require the following.

\begin{lemma}
\textit{For each positive fixed }$x$
\begin{equation}
\int_{x}^{\infty}{\frac{J_{\nu}\left(  t\right)  Y_{\nu}\left(  t\right)  }%
{t}dt}=\frac{1}{\nu\pi}\ln\left(  {\frac{x}{2\nu}}\right)  +O\left(  {\frac
{1}{\nu^{3}}}\right)  , \label{eq24}%
\end{equation}
\textit{as} $\nu\rightarrow\infty$.
\end{lemma}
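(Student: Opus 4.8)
The plan is to start from the closed-form evaluation already obtained in the proof of Proposition 2.2, namely equation (\ref{eq16}),
\[
\int_{x}^{\infty}\frac{J_{\nu}(t)Y_{\nu}(t)}{t}\,dt=\frac{x}{2\nu}\left[\frac{\partial J_{\nu+1}(x)}{\partial\nu}Y_{\nu}(x)-\frac{\partial J_{\nu}(x)}{\partial\nu}Y_{\nu+1}(x)\right]-\frac{J_{\nu}(x)Y_{\nu}(x)}{2\nu},
\]
and simply to insert the large-$\nu$, fixed-$x$ asymptotics of each factor. The advantage of this route is that $x$ is held fixed and finite, so we stay well below the turning point $t=\nu$ and may use the convergent ascending series for the Bessel functions rather than any uniform or turning-point machinery.

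The required ingredients are all elementary. For fixed $x$ and $\nu\to\infty$ one has $J_{\nu}(x)=(x/2)^{\nu}\{1+O(1/\nu)\}/\Gamma(\nu+1)$ and $Y_{\nu}(x)=-(2/x)^{\nu}\Gamma(\nu)\{1+O(1/\nu)\}/\pi$, and differentiating the convergent series for $J_{\mu}(x)$ term by term gives $\partial J_{\mu}(x)/\partial\nu=J_{\mu}(x)\{\ln(x/2)-\psi(\mu+1)+O(1/\mu^{2})\}$, where $\psi$ is the digamma function. I would first substitute these into the bracket and use the exact cross-product relation (\ref{eq13}), $J_{\nu+1}(x)Y_{\nu}(x)-J_{\nu}(x)Y_{\nu+1}(x)=2/(\pi x)$, together with $\psi(\nu+2)=\psi(\nu+1)+1/(\nu+1)$, to collapse the bracket to $\frac{2}{\pi x}\{\ln(x/2)-\psi(\nu+1)\}$ plus terms that are $O(1/\nu^{2})$. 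Feeding in the expansion $\psi(\nu+1)=\ln\nu+\frac{1}{2\nu}+O(1/\nu^{2})$ then produces the leading contribution $\frac{x}{2\nu}\cdot\frac{2}{\pi x}\ln(x/2\nu)=\frac{1}{\pi\nu}\ln(x/2\nu)$, which is exactly the claimed main term.

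The delicate point — and the step I expect to be the main obstacle — is the bookkeeping at order $1/\nu^{2}$, since the answer is asserted to be accurate to $O(1/\nu^{3})$. The half-integer shift in $\psi(\nu+1)=\ln\nu+\frac{1}{2\nu}+\cdots$ feeds a term $-1/(2\pi\nu^{2})$ into $\frac{x}{2\nu}\times(\text{bracket})$, while the last term of (\ref{eq16}) contributes $-J_{\nu}(x)Y_{\nu}(x)/(2\nu)=+1/(2\pi\nu^{2})+O(1/\nu^{3})$, using $J_{\nu}(x)Y_{\nu}(x)=-1/(\pi\nu)+O(1/\nu^{2})$. These two $O(1/\nu^{2})$ contributions must cancel exactly; verifying that this cancellation is clean, and that all remaining pieces (the $1/(\nu+1)$ correction from the shifted digamma, the $O(1/\mu^{2})$ remainder in $\partial J_{\mu}/\partial\nu$, and the relative $O(1/\nu)$ corrections in $J_{\nu}$ and $Y_{\nu}$) genuinely land in $O(1/\nu^{3})$ after multiplication by the prefactor $x/2\nu$, is the crux of the argument. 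One must also keep in mind that $J_{\nu}(x)Y_{\nu+1}(x)=O(1)$ whereas $J_{\nu+1}(x)Y_{\nu}(x)=O(1/\nu^{2})$, so the two halves of the bracket are very unequal in size and the subleading corrections enter asymmetrically.

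As an independent check on the leading term I would compute the integral directly from the Debye approximation $J_{\nu}(t)Y_{\nu}(t)\sim-1/(\pi\sqrt{\nu^{2}-t^{2}})$ valid for $x\le t<\nu$; the elementary integral $\int_{x}^{\nu}dt/(t\sqrt{\nu^{2}-t^{2}})=\nu^{-1}\ln\{(\nu+\sqrt{\nu^{2}-x^{2}})/x\}$ reproduces $\frac{1}{\pi\nu}\ln(x/2\nu)$ to leading order, the oscillatory tail $t>\nu$ contributing at lower order. This route, however, would demand delicate turning-point and stationary-phase estimates to reach $O(1/\nu^{3})$, which is why I favour the closed-form approach above.
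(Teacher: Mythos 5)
Your proposal follows exactly the paper's route: the published proof of this lemma consists precisely of substituting the ascending series (\ref{eq25}) and the connection formula (\ref{eq26}) into the closed form (\ref{eq16}) and reading off the large-$\nu$, fixed-$x$ behaviour, which is what you do. Your detailed bookkeeping --- in particular the clean cancellation at order $1/\nu^{2}$ between the $-1/(2\pi\nu^{2})$ coming from $\psi(\nu+1)=\ln\nu+\tfrac{1}{2\nu}+\cdots$ and the $+1/(2\pi\nu^{2})$ from $-J_{\nu}(x)Y_{\nu}(x)/(2\nu)$ --- is correct and simply fills in the details the paper leaves to the reader.
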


\begin{proof}
Using
\begin{equation}
J_{\nu}\left(  x\right)  =\left(  {\frac{x}{2}}\right)  ^{\nu}\sum
\limits_{k=0}^{\infty}{\left(  {-1}\right)  ^{k}\frac{\left(  {{\frac{1}{4}%
}x^{2}}\right)  ^{k}}{k!\Gamma\left(  {\nu+k+1}\right)  }}, \label{eq25}%
\end{equation}
and, assuming temporarily that $\nu$ is not an integer,
\begin{equation}
Y_{\nu}\left(  x\right)  =\frac{J_{\nu}\left(  x\right)  \cos\left(  {\nu\pi
}\right)  -J_{-\nu}\left(  x\right)  }{\sin\left(  {\nu\pi}\right)  },
\label{eq26}%
\end{equation}
we find from (\ref{eq16}) that (\ref{eq24}) holds. By continuity the condition
$\nu$ not an integer can now be relaxed.
\end{proof}

\begin{proposition}
For positive x, and $\nu\neq0,-1,-2,-3,\cdots$
\begin{equation}
\int_{x}^{\infty}{\frac{J_{\nu}\left(  t\right)  Y_{\nu}\left(  t\right)  }%
{t}dt}=\frac{\ln\left(  {{\frac{1}{2}}x}\right)  -\psi\left(  \nu\right)
}{\nu\pi}-\frac{1}{2\nu}\sum\limits_{n=0}^{\infty}{\left\{  {\varepsilon
_{n}J_{\nu+n}\left(  x\right)  Y_{\nu+n}\left(  x\right)  +\frac{2}{\pi\left(
{\nu+n}\right)  }}\right\}  }, \label{eq27}%
\end{equation}
where $\psi\left(  \nu\right)  $\ is the Psi function [14, \S 5.2(i)]. For
$\nu=-1,-2,-3,\cdots$\ (\ref{eq27}) holds with $\nu$\ replaced by $\left\vert
\nu\right\vert $on the RHS.
\end{proposition}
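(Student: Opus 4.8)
The plan is to prove (\ref{eq27}) by differentiating both sides with respect to $x$, checking that the derivatives coincide, and then fixing the (possibly $\nu$-dependent) constant of integration. Write $F(x)$ for the left-hand side of (\ref{eq27}) and $G(x)$ for the right-hand side. Before differentiating I would record the large-order behaviour $J_{\mu}(x)Y_{\mu}(x)=-1/(\pi\mu)+O(\mu^{-2})$ for fixed $x$, obtained from the ascending series for $J_{\mu}$ and $Y_{\mu}$. This single estimate does double duty: it shows that the series defining $G$ converges (the bracketed terms $\varepsilon_{n}J_{\nu+n}Y_{\nu+n}+2/[\pi(\nu+n)]$ are $O(n^{-2})$, even though $\sum 2/[\pi(\nu+n)]$ by itself diverges), and it will reappear when the constant is determined.

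For the differentiation step, $F'(x)=-J_{\nu}(x)Y_{\nu}(x)/x$ is immediate. On the $G$ side the constants $2/[\pi(\nu+n)]$ drop out, the term $\ln({\tfrac12}x)/(\nu\pi)$ contributes $1/(\nu\pi x)$, and the remaining task is to differentiate the Bessel series. Using the common cylinder recurrences $\mathcal{C}_{\mu}'=\tfrac12(\mathcal{C}_{\mu-1}-\mathcal{C}_{\mu+1})$ and $\mathcal{C}_{\mu-1}+\mathcal{C}_{\mu+1}=(2\mu/x)\mathcal{C}_{\mu}$, one finds $(J_{\mu}Y_{\mu})'=\tfrac12(S_{\mu}-S_{\mu+1})$ with $S_{\mu}=J_{\mu-1}Y_{\mu}+J_{\mu}Y_{\mu-1}$, so that $\sum_{n}\varepsilon_{n}(J_{\nu+n}Y_{\nu+n})'$ telescopes; term-by-term differentiation is legitimate because this derivative series converges uniformly on compact subsets of $(0,\infty)$. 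The delicate point — and the step I would watch most carefully — is the boundary term of the telescoping: $S_{\mu}$ does \emph{not} tend to $0$. By the Wronskian (\ref{eq13}) one has $S_{\mu}=2J_{\mu-1}Y_{\mu}+2/(\pi x)$, and since $J_{\mu-1}Y_{\mu}\to-2/(\pi x)$ as $\mu\to\infty$ we get $S_{\mu}\to-2/(\pi x)$. Carrying this nonzero limit through the telescoping produces precisely an extra $-1/(\nu\pi x)$, which cancels the derivative of the logarithm and leaves $G'(x)=-J_{\nu}(x)Y_{\nu}(x)/x=F'(x)$.

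Hence $F(x)-G(x)=c(\nu)$ is independent of $x$, and the main obstacle is to show $c(\nu)=0$. I would evaluate the difference as $x\to0^{+}$ (the limit $x\to\infty$ is unattractive here, since the cancellation that makes $G$'s series converge degenerates when $\nu+n$ is not large compared with $x$). For $G$ the limit is clean: each $J_{\nu+n}Y_{\nu+n}\to-1/[\pi(\nu+n)]$, the $n\ge1$ contributions cancel against the $2/[\pi(\nu+n)]$ terms, the $n=0$ term leaves $1/(\pi\nu)$, and so $G(x)=\ln({\tfrac12}x)/(\nu\pi)-\psi(\nu)/(\nu\pi)-1/(2\pi\nu^{2})+o(1)$. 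For $F$ I would use the already-established relation (\ref{eq16}), which expresses $F$ through $\partial J_{\nu}/\partial\nu$ and $\partial J_{\nu+1}/\partial\nu$, together with the known expansion $\partial J_{\mu}/\partial\nu=J_{\mu}\ln({\tfrac12}x)-({\tfrac12}x)^{\mu}\psi(\mu+1)/\Gamma(\mu+1)+\cdots$ from [14, \S10.15]. A short computation (using $({\tfrac12}x)^{\nu}(2/x)^{\nu+1}=2/x$ and $\psi(\nu+1)=\psi(\nu)+1/\nu$) gives $F(x)=\ln({\tfrac12}x)/(\pi\nu)-\psi(\nu)/(\pi\nu)-1/(2\pi\nu^{2})+o(1)$, which agrees with $G$ through the constant term; the divergent logarithms cancel in $F-G$, so $c(\nu)=0$. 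This constant-fixing is the crux of the argument, as it is the only place where genuine external input enters — namely the vanishing of $C_{0}$ in (\ref{eq16}) and the $\psi$-series for $\partial J_{\mu}/\partial\nu$.

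Finally, for $\nu=-1,-2,\dots$ the parity relations $J_{-m}=(-1)^{m}J_{m}$ and $Y_{-m}=(-1)^{m}Y_{m}$ give $J_{-m}Y_{-m}=J_{m}Y_{m}$, so $F(x)$ is unchanged under $\nu\mapsto|\nu|$; applying the case already proved for the positive order $|\nu|$ then yields the stated identity with $\nu$ replaced by $|\nu|$ on the right-hand side.
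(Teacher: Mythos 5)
Your proof is correct, but it is organized quite differently from the paper's. Both arguments run on the same telescoping engine --- the recurrences $\mathcal{C}_\mu'=\tfrac12(\mathcal{C}_{\mu-1}-\mathcal{C}_{\mu+1})$ and $\mathcal{C}_{\mu-1}+\mathcal{C}_{\mu+1}=(2\mu/x)\mathcal{C}_\mu$ applied to $\sum_n\varepsilon_n\{J_{\nu+n}Y_{\nu+n}\}'$ --- but you deploy it as a derivative-matching argument in $x$ followed by evaluation of the constant at $x\to0^+$, whereas the paper keeps the sum finite ($n\le N$), integrates the telescoped identity over $(x,\infty)$ to obtain (\ref{eq33})--(\ref{eq35}), and then sends $N\to\infty$, with $\ln(\tfrac12x)-\psi(\nu)$ emerging from the cancellation of $\ln N$ between the digamma asymptotic (\ref{eq36}) and Lemma 3.4's estimate (\ref{eq37}) of $(\nu+N)\int_x^\infty t^{-1}J_{\nu+N}Y_{\nu+N}\,dt$. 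Your route avoids that double limit entirely and instead imports the constant from the small-argument expansion $\partial J_\mu/\partial\nu=J_\mu\ln(\tfrac12x)-(\tfrac12x)^\mu\psi(\mu+1)/\Gamma(\mu+1)+\cdots$ fed into (\ref{eq16}); this is not circular, since (\ref{eq16}) with $C_0=0$ is established in the proof of Proposition 2.2, and the paper's own Lemma 3.4 rests on (\ref{eq16}) as well, so the two proofs have the same logical ancestry. Your identification of the nonzero boundary limit $S_\mu\to-2/(\pi x)$ as the source of the $-1/(\nu\pi x)$ that cancels the derivative of the logarithm is exactly the right delicate point, and your limiting values $F,G\to\ln(\tfrac12x)/(\pi\nu)-\psi(\nu)/(\pi\nu)-1/(2\pi\nu^2)+o(1)$ check out. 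The only steps you pass over lightly are routine: the term-by-term limit $x\to0^+$ in $G$'s series requires the $O(n^{-2})$ bound on the bracketed terms to be uniform for $x$ in a right neighborhood of $0$ (it is, from the ascending series), and the term-by-term differentiation requires $S_{\nu+N+1}(x)\to-2/(\pi x)$ uniformly on compact subsets of $(0,\infty)$ (also true). Neither is a genuine gap. What the paper's version buys is a single self-contained limiting process producing the digamma term intrinsically; what yours buys is a cleaner separation of the identity into ``same derivative'' plus ``same constant,'' at the cost of invoking the known $\psi$-series for $\partial J_\nu/\partial\nu$ from [14, \S 10.15].
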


\begin{proof}
Firstly we assume $\nu\neq0,-1,-2,-3,\cdots$. Then using
\[
\mathcal{C}_{p}^{\prime}\left(  t\right)  ={\frac{1}{2}}\mathcal{C}%
_{p-1}\left(  t\right)  -{\frac{1}{2}}\mathcal{C}_{p+1}\left(  t\right)  ,
\]
we have
\[%
\begin{array}
[c]{l}%
\left\{  {J_{\nu+n}\left(  t\right)  Y_{\nu+n}\left(  t\right)  }\right\}
^{\prime}={\frac{1}{2}}\left\{  {J_{\nu+n}\left(  t\right)  Y_{\nu+n-1}\left(
t\right)  -J_{\nu+n+1}\left(  t\right)  Y_{\nu+n}\left(  t\right)  }\right\}
\\
+{\frac{1}{2}}\left\{  {J_{\nu+n-1}\left(  t\right)  Y_{\nu+n}\left(
t\right)  -J_{\nu+n}\left(  t\right)  Y_{\nu+n+1}\left(  t\right)  }\right\}
.
\end{array}
\]
On summing both sides from $n=0$ to $n=N$ yields, via telescoping,
\begin{equation}%
\begin{array}
[c]{l}%
\sum\limits_{n=0}^{N}{\left\{  {J_{\nu+n}\left(  t\right)  Y_{\nu+n}\left(
t\right)  }\right\}  ^{\prime}}={\frac{1}{2}}J_{\nu}\left(  t\right)
Y_{\nu-1}\left(  t\right)  +{\frac{1}{2}}J_{\nu-1}\left(  t\right)  Y_{\nu
}\left(  t\right) \\
-{\frac{1}{2}}J_{\nu+N+1}\left(  t\right)  Y_{\nu+N}\left(  t\right)
-{\frac{1}{2}}J_{\nu+N}\left(  t\right)  Y_{\nu+N+1}\left(  t\right)  .
\end{array}
\label{eq30}%
\end{equation}
We plan to let $N\rightarrow\infty$. Next we use
\[
\mathcal{C}_{p-1}\left(  t\right)  =\left(  {p/t}\right)  \mathcal{C}%
_{p}\left(  t\right)  +\mathcal{C}_{p}^{\prime}\left(  t\right)  ,
\]
to obtain from (\ref{eq30})
\[%
\begin{array}
[c]{l}%
\sum\limits_{n=0}^{N}{\left\{  {J_{\nu+n}\left(  t\right)  Y_{\nu+n}\left(
t\right)  }\right\}  ^{\prime}}=\left(  {\nu/t}\right)  J_{\nu}\left(
t\right)  Y_{\nu}\left(  t\right)  +{\frac{1}{2}}\left\{  {J_{\nu}\left(
t\right)  Y_{\nu}\left(  t\right)  }\right\}  ^{\prime}\\
-\left(  {\left(  {\nu+N}\right)  /t}\right)  J_{\nu+N}\left(  t\right)
Y_{\nu+N}\left(  t\right)  -{\frac{1}{2}}\left\{  {J_{\nu+N}\left(  t\right)
Y_{\nu+N}\left(  t\right)  }\right\}  ^{\prime}.
\end{array}
\]
Integrating this from $t=x$ to $t=\infty$, and using (\ref{eq5}) and
(\ref{eq6}), then yields
\begin{equation}%
\begin{array}
[c]{l}%
{\frac{1}{2}}J_{\nu}\left(  x\right)  Y_{\nu}\left(  x\right)  -{\frac{1}{2}%
}J_{\nu+N}\left(  x\right)  Y_{\nu+N}\left(  x\right)  -\sum\limits_{n=0}%
^{N}{J_{\nu+n}\left(  x\right)  Y_{\nu+n}\left(  x\right)  }\\
=\nu\int_{x}^{\infty}{t^{-1}J_{\nu}\left(  t\right)  Y_{\nu}\left(  t\right)
dt}-\left(  {\nu+N}\right)  \int_{x}^{\infty}{t^{-1}J_{\nu+N}\left(  t\right)
Y_{\nu+N}\left(  t\right)  dt}.
\end{array}
\label{eq33}%
\end{equation}
But, for fixed $x>0$, we have from (\ref{eq25}) and (\ref{eq26})
\[
J_{\nu+n}\left(  x\right)  Y_{\nu+n}\left(  x\right)  =-\frac{1}{\pi\left(
{\nu+n}\right)  }-\frac{x^{2}}{2\pi\left(  {\nu+n}\right)  ^{3}}+O\left(
{\frac{1}{n^{4}}}\right)  ,
\]
as $n\rightarrow\infty$. Thus, in order to ensure that the series in
(\ref{eq33}) converges as $N\rightarrow\infty$, we rewrite the equation as
\begin{equation}%
\begin{array}
[c]{l}%
{\frac{1}{2}}J_{\nu}\left(  x\right)  Y_{\nu}\left(  x\right)  -{\frac{1}{2}%
}J_{\nu+N}\left(  x\right)  Y_{\nu+N}\left(  x\right)  -\sum\limits_{n=0}%
^{N}{\left\{  {J_{\nu+n}\left(  x\right)  Y_{\nu+n}\left(  x\right)
+\dfrac{1}{\pi\left(  {\nu+n}\right)  }}\right\}  }\\
=\nu\int_{x}^{\infty}{\dfrac{J_{\nu}\left(  t\right)  Y_{\nu}\left(  t\right)
}{t}dt}-\dfrac{1}{\pi}\sum\limits_{n=0}^{N}{\dfrac{1}{\nu+n}}-\left(  {\nu
+N}\right)  \int_{x}^{\infty}{\dfrac{J_{\nu+N}\left(  t\right)  Y_{\nu
+N}\left(  t\right)  }{t}dt}.
\end{array}
\label{eq35}%
\end{equation}
Next, it is well known that [14, Chap. 5]
\begin{equation}
\sum\limits_{n=0}^{N}{\frac{1}{\nu+n}}=\psi\left(  {\nu+N+1}\right)
-\psi\left(  \nu\right)  =\ln\left(  N\right)  -\psi\left(  \nu\right)
+O\left(  {\frac{1}{N}}\right)  \quad\left(  {N\rightarrow\infty}\right)  .
\label{eq36}%
\end{equation}
Moreover, from (\ref{eq24}),
\begin{equation}
\left(  {\nu+N}\right)  \int_{x}^{\infty}{\frac{J_{\nu+N}\left(  t\right)
Y_{\nu+N}\left(  t\right)  }{t}dt}=\frac{1}{\pi}\ln\left(  {\frac{x}{2N}%
}\right)  +O\left(  {\frac{1}{N}}\right)  . \label{eq37}%
\end{equation}
Using (\ref{eq36}) and (\ref{eq37}), and then letting $N\rightarrow\infty$ in
(\ref{eq35}), yields (\ref{eq27}).

Finally, for $\nu=-1,-2,-3,\cdots$, we observe that $J_{\nu}\left(  t\right)
Y_{\nu}\left(  t\right)  =J_{-\nu}\left(  t\right)  Y_{-\nu}\left(  t\right)
$ ([14, \S 10.4]), and hence (\ref{eq27}) holds with $\nu$ replaced by
$\left\vert \nu\right\vert $ on the RHS.
\end{proof}

In the case $\nu=0$ we take limits for the RHS of (\ref{eq27}), and we obtain
\begin{equation}
\int_{x}^{\infty}{\frac{J_{0}\left(  t\right)  Y_{0}\left(  t\right)  }{t}%
dt}=-\frac{1}{2}\sum\limits_{n=0}^{\infty}{\varepsilon_{n}\left.
{\frac{\partial}{\partial\nu}\left\{  {J_{\nu+n}\left(  x\right)  Y_{\nu
+n}\left(  x\right)  }\right\}  }\right\vert _{\nu=0}}. \label{eq38}%
\end{equation}
In deriving this we used L'Hopital's rule, along with the relation
\[
\frac{1}{\nu^{2}}-{\psi}^{\prime}\left(  \nu\right)  =-\frac{\pi^{2}}%
{6}+O\left(  \nu\right)  \quad\left(  {\nu\rightarrow0}\right)  .
\]
We note in passing that if we multiply both sides of (\ref{eq27}) by $\nu$,
and then set $\nu=0$, we obtain the following new result.

\begin{corollary}
\textit{For positive x}
\begin{equation}
\sum\limits_{n=1}^{\infty}{\left\{  {J_{n}\left(  x\right)  Y_{n}\left(
x\right)  +\frac{1}{\pi n}}\right\}  }=\frac{\ln\left(  {{\frac{1}{2}}%
x}\right)  +\gamma}{\pi}-\frac{1}{2}J_{0}\left(  x\right)  Y_{0}\left(
x\right)  . \label{eq40}%
\end{equation}

\end{corollary}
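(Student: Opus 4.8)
The plan is to obtain (\ref{eq40}) directly as the $\nu\to0$ limit of (\ref{eq27}) after clearing the prefactor $1/\nu$, exactly as the sentence preceding the Corollary suggests. First I would multiply both sides of (\ref{eq27}) by $\nu$:
\[
\begin{array}{l}
\nu\displaystyle\int_{x}^{\infty}\frac{J_{\nu}\left(t\right)Y_{\nu}\left(t\right)}{t}\,dt=\frac{\ln\left(\frac{1}{2}x\right)-\psi\left(\nu\right)}{\pi}\\[6pt]
\qquad{}-\dfrac{1}{2}\sum\limits_{n=0}^{\infty}\left\{\varepsilon_{n}J_{\nu+n}\left(x\right)Y_{\nu+n}\left(x\right)+\dfrac{2}{\pi\left(\nu+n\right)}\right\}.
\end{array}
\]
The left-hand side tends to $0$ as $\nu\to0$: by (\ref{eq17}), together with continuity in $\nu$ of the (convergent) integral for each fixed $x>0$, the integral stays bounded as $\nu\to0$, so the factor $\nu$ forces the product to vanish in the limit.

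Next I would isolate the two sources of singularity at $\nu=0$ on the right-hand side and show that they cancel. The $n=0$ term of the series ($\varepsilon_{0}=1$) contributes $-\frac{1}{2}J_{\nu}\left(x\right)Y_{\nu}\left(x\right)-\frac{1}{\pi\nu}$, while the Psi term contributes $-\psi\left(\nu\right)/\pi$. Inserting the Laurent expansion $\psi\left(\nu\right)=-1/\nu-\gamma+O\left(\nu\right)$ as $\nu\to0$, I find $-\psi\left(\nu\right)/\pi=\frac{1}{\pi\nu}+\frac{\gamma}{\pi}+O\left(\nu\right)$, so the two $1/\left(\pi\nu\right)$ poles cancel exactly. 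Since $J_{\nu}\left(x\right)Y_{\nu}\left(x\right)\to J_{0}\left(x\right)Y_{0}\left(x\right)$ is finite for $x>0$, the finite residue of these combined terms is $\frac{\ln\left(\frac{1}{2}x\right)+\gamma}{\pi}-\frac{1}{2}J_{0}\left(x\right)Y_{0}\left(x\right)$.

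For the tail $n\geq1$ (where $\varepsilon_{n}=2$), the overall factor $-\frac{1}{2}$ turns each summand into $-\left\{J_{\nu+n}\left(x\right)Y_{\nu+n}\left(x\right)+\frac{1}{\pi\left(\nu+n\right)}\right\}$. By the small-argument expansion already used in establishing (\ref{eq35}), namely $J_{\nu+n}\left(x\right)Y_{\nu+n}\left(x\right)=-\frac{1}{\pi\left(\nu+n\right)}-\frac{x^{2}}{2\pi\left(\nu+n\right)^{3}}+O\left(n^{-4}\right)$, each bracketed term is $O\left(n^{-3}\right)$ uniformly for $\nu$ in a neighbourhood of $0$; this majorant permits passing the limit inside the sum, giving $\sum_{n\geq1}\{\cdots\}\to\sum_{n\geq1}\{J_{n}\left(x\right)Y_{n}\left(x\right)+\frac{1}{\pi n}\}$. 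Equating the $\nu\to0$ limits of the two sides (the left being $0$) and rearranging then yields (\ref{eq40}).

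The main obstacle is the bookkeeping of the pole at $\nu=0$: one cannot take the limit termwise in (\ref{eq27}) as written, because the $n=0$ term of the series and the factor $\psi\left(\nu\right)$ are each individually singular; they must be combined first, and it is precisely the exact cancellation of their $1/\nu$ parts that produces the clean constant $\gamma/\pi$. The only genuinely analytic point beyond this algebra is the interchange of the limit $\nu\to0$ with the infinite summation, which the uniform $O\left(n^{-3}\right)$ bound settles.
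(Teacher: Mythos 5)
Your proposal is correct and follows exactly the route the paper indicates (multiplying (\ref{eq27}) by $\nu$ and letting $\nu\to0$), merely supplying the details — the cancellation of the $1/(\pi\nu)$ poles between $-\psi(\nu)/\pi$ and the $n=0$ term, and the uniform $O(n^{-3})$ bound justifying the termwise limit — that the paper leaves implicit.
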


Finally, in order to obtain a complementary representation for $\int
_{x}^{\infty}{t^{-1}Y_{\nu}^{2}\left(  t\right)  dt}$, one can use the
identity
\begin{equation}
J_{-\nu}\left(  t\right)  Y_{-\nu}\left(  t\right)  =\cos\left(  {2\nu\pi
}\right)  J_{\nu}\left(  t\right)  Y_{\nu}\left(  t\right)  +{\frac{1}{2}}%
\sin\left(  {2\nu\pi}\right)  \left\{  {J_{\nu}^{2}\left(  t\right)  -Y_{\nu
}^{2}\left(  t\right)  }\right\}  . \label{eq41}%
\end{equation}
(see [14, (10.4.5)]. Thus, on integrating from $t=x$ to $t=\infty$, we obtain
\begin{equation}%
\begin{array}
[c]{l}%
\int_{x}^{\infty}{\dfrac{Y_{\nu}^{2}\left(  t\right)  }{t}dt}=\int_{x}%
^{\infty}{\dfrac{J_{\nu}^{2}\left(  t\right)  }{t}dt}+2\cot\left(  {2\nu\pi
}\right)  \int_{x}^{\infty}{\dfrac{J_{\nu}\left(  t\right)  Y_{\nu}\left(
t\right)  }{t}dt}\\
-2\csc\left(  {2\nu\pi}\right)  \int_{x}^{\infty}{\dfrac{J_{-\nu}\left(
t\right)  Y_{-\nu}\left(  t\right)  }{t}dt},
\end{array}
\label{eq42}%
\end{equation}
provided $2\nu$ is not an integer. Then one can substitute (\ref{eq10}),
(\ref{eq21}), and (\ref{eq27}) (with $\nu$ replaced by $-\nu$ for the third
integral in (\ref{eq42})) to obtain the desired result.

If $2\nu=p$ is an integer we can take limits in (\ref{eq41}), and in
particular use
\[%
\begin{array}
[c]{l}%
\lim\limits_{2\nu\rightarrow p}\dfrac{\cos\left(  {2\nu\pi}\right)  J_{\nu
}\left(  t\right)  Y_{\nu}\left(  t\right)  -J_{-\nu}\left(  t\right)
Y_{-\nu}\left(  t\right)  }{\sin\left(  {2\nu\pi}\right)  }\\
=\dfrac{1}{2\pi}\left.  {\dfrac{\partial}{\partial\nu}\left\{  {J_{\nu}\left(
t\right)  Y_{\nu}\left(  t\right)  }\right\}  }\right\vert _{\nu=p/2}+\left.
{\dfrac{\left(  {-1}\right)  ^{p}}{2\pi}\dfrac{\partial}{\partial\nu}\left\{
{J_{\nu}\left(  t\right)  Y_{\nu}\left(  t\right)  }\right\}  }\right\vert
_{\nu=-p/2}.
\end{array}
\]
As a result, we have
\begin{equation}%
\begin{array}
[c]{l}%
\int_{x}^{\infty}{\dfrac{Y_{p/2}^{2}\left(  t\right)  }{t}dt}=\int_{x}%
^{\infty}{\dfrac{J_{p/2}^{2}\left(  t\right)  }{t}dt}+\dfrac{1}{\pi}%
\dfrac{\partial}{\partial\nu}\left.  {\int_{x}^{\infty}{J_{\nu}\left(
t\right)  Y_{\nu}\left(  t\right)  dt}}\right\vert _{\nu=p/2}\\
+\dfrac{\left(  {-1}\right)  ^{p}}{\pi}\dfrac{\partial}{\partial\nu}\left.
{\int_{x}^{\infty}{J_{\nu}\left(  t\right)  Y_{\nu}\left(  t\right)  dt}%
}\right\vert _{\nu=-p/2}.
\end{array}
\label{eq44}%
\end{equation}

\section{Uniform asymptotic approximations for $\hat{{J}}_{\nu}\left(  {\nu
x}\right)  $ and $\hat{{Y}}_{\nu}\left(  {\nu x}\right)  $}

We now obtain uniform asymptotic approximations for the order derivatives of
$J_{\nu}\left(  x\right)  $ and $Y_{\nu}\left(  x\right)  $ for large $\nu$,
in terms of the Airy functions $\mathrm{Ai}\left(  x\right)  $ and
$\mathrm{Bi}\left(  x\right)  $. For brevity we restrict our consideration to
real argument and leading order approximations, although extensions to complex
argument and expansions are feasible from our methods. Explicit error bounds
are also available from the corresponding ones for the Bessel functions
themselves, but again for brevity we only include order estimates.

We note in passing the well-known behavior of Airy functions, given by
\begin{equation}
\mathrm{Ai}\left(  x\right)  \sim\frac{\exp\left(  {-{\frac{2}{3}}x^{3/2}%
}\right)  }{2\sqrt{\pi}x^{1/4}},\quad\mathrm{Bi}\left(  x\right)  \sim
\frac{\exp\left(  {{\frac{2}{3}}x^{3/2}}\right)  }{\sqrt{\pi}x^{1/4}}%
\quad\left(  {x\rightarrow\infty}\right)  , \label{eq45}%
\end{equation}
and
\begin{equation}
\mathrm{Ai}\left(  x\right)  \sim\frac{\cos\left(  {{\frac{2}{3}}\left\vert
x\right\vert ^{3/2}-{\frac{1}{4}}\pi}\right)  }{\sqrt{\pi}\left\vert
x\right\vert ^{1/4}},\quad\mathrm{Bi}\left(  x\right)  \sim-\frac{\sin\left(
{{\frac{2}{3}}\left\vert x\right\vert ^{3/2}-{\frac{1}{4}}\pi}\right)  }%
{\sqrt{\pi}\left\vert x\right\vert ^{1/4}}\quad\left(  {x\rightarrow-\infty
}\right)  . \label{eq46}%
\end{equation}
Moreover, $\mathrm{Ai}\left(  x\right)  $ and $\mathrm{Bi}\left(  x\right)  $
have no zeros for $0\leq x<\infty$.

We shall employ the corresponding uniform approximations for the Bessel
functions themselves, a brief description of which is given as follows. Let
$c=-0.36604\cdots$ be the largest real root of the equation $\mathrm{Ai}%
\left(  x\right)  =\mathrm{Bi}\left(  x\right)  $, and define the weight
function of $\mathrm{Ai}\left(  x\right)  $ and $\mathrm{Bi}\left(  x\right)
$ by
\[%
\begin{array}
[c]{l}%
E\left(  x\right)  =1\quad\left(  {-\infty<x\leq c}\right) \\
E\left(  x\right)  =\left\{  {\mathrm{Bi}\left(  x\right)  /\mathrm{Ai}\left(
x\right)  }\right\}  ^{1/2}\quad\left(  {c\leq x<\infty}\right)  ,
\end{array}
\]
and the modulus function by
\[%
\begin{array}
[c]{l}%
M\left(  x\right)  =\left\{  {\mathrm{Ai}^{2}\left(  x\right)  +\mathrm{Bi}%
^{2}\left(  x\right)  }\right\}  ^{1/2}\quad\left(  {-\infty<x\leq c}\right)
,\\
M\left(  x\right)  =\left\{  {2\mathrm{Ai}\left(  x\right)  \mathrm{Bi}\left(
x\right)  }\right\}  ^{1/2}\quad\left(  {c\leq x<\infty}\right)  .
\end{array}
\]
Next, introduce a new variable $\zeta$ by
\begin{equation}
\frac{2}{3}\zeta^{3/2}=\ln\left\{  {\frac{1+\left(  {1-x^{2}}\right)  ^{1/2}%
}{x}}\right\}  -\left(  {1-x^{2}}\right)  ^{1/2}\quad\left(  {0<x\leq
1}\right)  , \label{eq49}%
\end{equation}%
\begin{equation}
\frac{2}{3}\left(  {-\zeta}\right)  ^{3/2}=\left(  {x^{2}-1}\right)
^{1/2}-\sec^{-1}\left(  x\right)  \quad\left(  {1\leq x<\infty}\right)  ,
\label{eq50}%
\end{equation}
all functions taking their principal values, with $\zeta=\infty,0,-\infty$,
corresponding to $x=0,1,\infty$, respectively. Note $\zeta\rightarrow-\infty$
as $x\rightarrow\infty$ such that
\begin{equation}
x={\frac{2}{3}}\left(  {-\zeta}\right)  ^{3/2}+{\frac{1}{2}}\pi+O\left(
{\left\vert \zeta\right\vert ^{-3/2}}\right)  , \label{eq51}%
\end{equation}
and $\zeta\rightarrow\infty$ as $x\rightarrow0^{+}$ such that
\begin{equation}
x=2\exp\left(  {-{\frac{2}{3}}\zeta^{3/2}-1}\right)  +O\left\{  {\exp\left(
{-{\frac{4}{3}}\zeta^{3/2}}\right)  }\right\}  . \label{eq52}%
\end{equation}

For complex $x$ and $\zeta$, the transformation is given by (\ref{eq49}),
where the branches take their principal values when $x\in\left(  {0,1}\right)
$ and $\zeta\in\left(  {0,\infty}\right)  $, and are continuous elsewhere.

From [13, Chap. 11, \S 10] we then have
\begin{equation}
Y_{\nu}\left(  {\nu x}\right)  =-\frac{1}{\nu^{1/3}}\left(  {\frac{4\zeta
}{1-x^{2}}}\right)  ^{1/4}\left\{  {\mathrm{Bi}\left(  {\nu^{2/3}\zeta
}\right)  +\varepsilon_{1}\left(  {\nu,\zeta}\right)  }\right\}  ,
\label{eq53}%
\end{equation}
and
\begin{equation}
J_{\nu}\left(  {\nu x}\right)  =\frac{1}{\nu^{1/3}}\left(  {\frac{4\zeta
}{1-x^{2}}}\right)  ^{1/4}\left\{  {\mathrm{Ai}\left(  {\nu^{2/3}\zeta
}\right)  +\varepsilon_{2}\left(  {\nu,\zeta}\right)  }\right\}  ,
\label{eq54}%
\end{equation}
where
\begin{equation}
\varepsilon_{1}\left(  {\nu,\zeta}\right)  =O\left(  {\nu^{-1}}\right)
E\left(  {\nu^{2/3}\zeta}\right)  M\left(  {\nu^{2/3}\zeta}\right)  ,
\label{eq55}%
\end{equation}
and
\begin{equation}
\varepsilon_{2}\left(  {\nu,\zeta}\right)  =O\left(  {\nu^{-1}}\right)
M\left(  {\nu^{2/3}\zeta}\right)  /E\left(  {\nu^{2/3}\zeta}\right)  ,
\label{eq56}%
\end{equation}
uniformly for $0<x<\infty$ (i.e. $-\infty<\zeta<\infty)$.

We shall also utilize the uniform approximation
\begin{equation}
H_{\nu}^{\left(  1\right)  }\left(  {\nu x}\right)  =\frac{2e^{-\pi i/3}}%
{\nu^{1/3}}\left(  {\frac{4\zeta}{1-x^{2}}}\right)  ^{1/4}\mathrm{Ai}\left(
{e^{2\pi i/3}\nu^{2/3}\zeta}\right)  \left\{  {1+O\left(  {\frac{1}{\nu}%
}\right)  }\right\}  , \label{eq57}%
\end{equation}
where (for our purposes) $x$ and $\zeta$ are complex, with $-\pi\leq
\arg\left(  \zeta\right)  <-{\frac{1}{3}}\pi$, and correspondingly $x$ lying
in a certain unbounded subset of the upper half plane (see [13, Chap. 11,
\S 10]). A fundamental property of $\mathrm{Ai}\left(  {e^{2\pi i/3}\nu
^{2/3}\zeta}\right)  $ is that for large $\nu$ it is exponentially small in
the sector $-\pi<\arg\left(  \zeta\right)  <-{\frac{1}{3}}\pi$ (and likewise
$H_{\nu}^{\left(  1\right)  }\left(  {\nu x}\right)  $ for $x$ in the upper
half plane and lying outside a certain domain which contains the origin: see
Figs. 10.1 and 10.2 of [13, Chap. 11]).

Now, from (\ref{eq1}) and (\ref{eq11}) we have
\begin{equation}
\hat{{J}}_{\nu}\left(  {\nu x}\right)  =\nu\pi J_{\nu}\left(  {\nu x}\right)
I_{1}\left(  {\nu,\nu x}\right)  +\nu\pi Y_{\nu}\left(  {\nu x}\right)
I_{2}\left(  {\nu,\nu x}\right)  , \label{eq58}%
\end{equation}
and
\begin{equation}
\hat{{Y}}_{\nu}\left(  {\nu x}\right)  =\nu\pi J_{\nu}\left(  {\nu x}\right)
I_{3}\left(  {\nu,\nu x}\right)  -\nu\pi Y_{\nu}\left(  {\nu x}\right)
I_{1}\left(  {\nu,\nu x}\right)  -{\frac{1}{2}}\pi J_{\nu}\left(  {\nu
x}\right)  , \label{eq59}%
\end{equation}
where
\begin{equation}
I_{1}\left(  {\nu,t}\right)  =\int_{t}^{\infty}{\frac{J_{\nu}\left(  s\right)
Y_{\nu}\left(  s\right)  }{s}ds}, \label{eq60}%
\end{equation}%
\begin{equation}
I_{2}\left(  {\nu,t}\right)  =\int_{0}^{t}{\frac{J_{\nu}^{2}\left(  s\right)
}{s}ds}, \label{eq61}%
\end{equation}
and
\begin{equation}
I_{3}\left(  {\nu,t}\right)  =\int_{t}^{\infty}{\frac{Y_{\nu}^{2}\left(
s\right)  }{s}ds}. \label{eq62}%
\end{equation}
The Bessel functions in (\ref{eq58}) and (\ref{eq59}) can immediately be
replaced by their Airy function approximations, given above. Regarding the
integrals $I_{j}\left(  {\nu,t}\right)  $ ($j=1,2,3)$ we note, from
(\ref{eq53}) - (\ref{eq56}) and (\ref{eq46}), that the integrands in
(\ref{eq60}) and (\ref{eq62}) are highly oscillatory in the interval
$\nu<s<\infty$, and likewise for (\ref{eq61}) if $t>\nu$. This is detrimental
in numerical evaluations, so our goal is to approximate these integrals by
readily computable functions and/or integrals with monotonic or slowly varying integrands.

Our results are summarized in Theorem 4.2 below. In this section and the next,
$\zeta\left(  t\right)  $ appearing in an integral denotes the function given
by (\ref{eq49}) and (\ref{eq50}) with $x$ replaced by $t$, and similarly
$x\left(  \eta\right)  $ is given by (\ref{eq49}) and (\ref{eq50}) with
$\zeta$ replaced by $\eta.$

Before stating our main theorem, we shall require the following result.

\begin{lemma}
Let
\begin{equation}
L\left(  {\nu,t}\right)  =\int_{t}^{\infty}{\frac{\left\{  {H_{\nu}^{\left(
1\right)  }\left(  s\right)  }\right\}  ^{2}}{2s}ds}. \label{eq63}%
\end{equation}
Then as $\nu\rightarrow\infty$
\begin{equation}
L\left(  {\nu,\nu x}\right)  =\frac{4e^{-2\pi i/3}}{\nu^{2/3}}\int
_{x}^{i\infty}{\left(  {\frac{\zeta\left(  t\right)  }{1-t^{2}}}\right)
^{1/2}\frac{\mathrm{Ai}^{2}\left(  {e^{2\pi i/3}\nu^{2/3}\zeta\left(
t\right)  }\right)  }{t}dt}\left\{  {1+O\left(  {\frac{1}{\nu}}\right)
}\right\}  , \label{eq64}%
\end{equation}
uniformly for $1\leq x<\infty$. Here the path of integration lies in the first
quadrant and is bounded away from the boundary EPB in Fig. 10.1 of [13, Chap.
11, \S 10], except for the endpoint when it is located at $x=1$.
\end{lemma}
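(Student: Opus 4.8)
The plan is to rescale the variable, deform the path of integration off the real axis into the first quadrant where $H_{\nu}^{(1)}$ is recessive, and only then substitute the uniform Airy approximation (\ref{eq57}). First I would put $s=\nu t$ in (\ref{eq63}), giving
\[
L\left(  \nu,\nu x\right)  =\int_{x}^{\infty}\frac{\left\{  H_{\nu}^{\left(
1\right)  }\left(  \nu t\right)  \right\}  ^{2}}{2t}\,dt .
\]
This integral is absolutely convergent along the real axis: from the standard large-argument form $H_{\nu}^{\left(  1\right)  }\left(  s\right)  \sim(2/\pi s)^{1/2}\exp\{i(s-\tfrac{1}{2}\nu\pi-\tfrac{1}{4}\pi)\}$ the integrand is $O\left(  t^{-2}\right)  $, so $L$ is well defined. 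However, on $\left(  1,\infty\right)  $ the factor $\{H_{\nu}^{(1)}\}^{2}$ is purely oscillatory (cf. (\ref{eq46})), which is precisely the behaviour we wish to remove.

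The heart of the argument is a contour deformation. The integrand $\{H_{\nu}^{(1)}(\nu t)\}^{2}/t$ is analytic in the open first quadrant; the branch cut of $H_{\nu}^{(1)}$ lies on $\left(  -\infty,0\right]  $, and since $x\geq1$ the region swept out during the rotation excludes the origin, so no singularity is encountered. Moreover $H_{\nu}^{(1)}(\nu t)$ decays exponentially as $t\rightarrow\infty$ in the upper half-plane, so the contribution of a large circular arc joining the real axis to the positive imaginary axis tends to zero. By Cauchy's theorem I may therefore rotate the path $[x,\infty)$ onto the path described in the statement, running from $x$ into the first quadrant out to $i\infty$ and bounded away from the boundary EPB of [13, Fig. 10.1]. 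Under the map $t\mapsto\zeta(t)$ this path lies in the sector $-\pi<\arg(\zeta)<-\tfrac{1}{3}\pi$, precisely the region in which $\mathrm{Ai}(e^{2\pi i/3}\nu^{2/3}\zeta)$—and hence $H_{\nu}^{(1)}(\nu t)$—is exponentially small, so the deformed integral converges absolutely and rapidly.

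On the deformed contour I would now insert (\ref{eq57}). Squaring gives
\[
\left\{  H_{\nu}^{(1)}(\nu t)\right\}  ^{2}=\frac{4e^{-2\pi i/3}}{\nu^{2/3}
}\,2\left(  \frac{\zeta(t)}{1-t^{2}}\right)  ^{1/2}\mathrm{Ai}^{2}\!\left(
e^{2\pi i/3}\nu^{2/3}\zeta(t)\right)  \left\{  1+O\!\left(  \tfrac{1}{\nu
}\right)  \right\}  ,
\]
since $(2e^{-\pi i/3})^{2}=4e^{-2\pi i/3}$ and $(4\zeta/(1-t^{2}))^{1/2}=2(\zeta/(1-t^{2}))^{1/2}$. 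Dividing by $2t$ and integrating over the deformed path collapses the numerical constants to $4e^{-2\pi i/3}/\nu^{2/3}$ and leaves the weight $1/t$, which yields (\ref{eq64}). Because the integrand is now exponentially small except near the endpoint, the relative error $\{1+O(\nu^{-1})\}$ may be pulled outside the integral with uniform control.

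The main obstacle is making the deformation step fully rigorous and uniform in $x$. Two points require care. First, one must verify that the entire deformed path—together with the region swept out during the rotation—lies in the domain of validity of (\ref{eq57}) and avoids EPB, so that the recessive (exponentially small) behaviour of $\mathrm{Ai}(e^{2\pi i/3}\nu^{2/3}\zeta)$ holds throughout; this is where the geometry of [13, Figs. 10.1, 10.2] is essential. Second, the endpoint $x=1$ is delicate because there $\zeta=0$ and the Airy argument sits near the origin: the factor $(\zeta/(1-t^{2}))^{1/2}$ remains analytic and non-zero as $t\rightarrow1$, so the integrand stays bounded, but the path must leave $t=1$ along the real axis before turning into the first quadrant, which is the reason for the stated exception. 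Uniformity of the $O(\nu^{-1})$ error for $1\leq x<\infty$ then follows from the uniformity of the error estimates (\ref{eq55})–(\ref{eq57}) and the exponential smallness along the contour.
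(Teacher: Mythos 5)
Your proposal is correct and follows essentially the same route as the paper: substitute $s=\nu t$, deform the contour from $[x,\infty)$ into the first quadrant (justified by holomorphy of the integrand there and its exponential smallness at $i\infty$), and insert the uniform Hankel approximation (\ref{eq57}), whose square produces exactly the constant $4e^{-2\pi i/3}/\nu^{2/3}$ and the factor $(\zeta/(1-t^2))^{1/2}$ in (\ref{eq64}). Your additional remarks on the geometry near EPB and the endpoint $x=1$ only elaborate on details the paper leaves implicit.
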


\begin{proof}
We make a simple change of variable $s=\nu t$ to obtain from (\ref{eq63})
\begin{equation}
L\left(  {\nu,\nu x}\right)  =\int_{x}^{\infty}{\frac{\left\{  {H_{\nu
}^{\left(  1\right)  }\left(  {\nu t}\right)  }\right\}  ^{2}}{2t}dt}.
\label{eq65}%
\end{equation}
We then use (\ref{eq57}) in the integrand, and deform the path of integration
from the positive $x$-axis to the stated path. The deformation is justifiable
since the integrand is holomorphic in the first quadrant, and exponentially
small at $x=i\infty$.
\end{proof}

Note that $e^{2\pi i/3}\zeta\rightarrow+\infty$ as $x\rightarrow i\infty$, and
hence the Airy function in the integrand of (\ref{eq64}) is non-zero and (in
absolute value) rapidly decreasing along the path of integration.

\begin{theorem}
For large $\nu$, uniformly for $0<x\leq1$ $(0\leq\zeta<\infty)$,%
\begin{equation}%
\begin{array}
[c]{l}%
I_{1}\left(  {\nu,\nu x}\right)  =\operatorname{Im}L\left(  {\nu,\nu}\right)
\\
-\dfrac{2}{\nu^{2/3}}\int_{x}^{1}{\left(  {\dfrac{\zeta\left(  t\right)
}{1-t^{2}}}\right)  ^{1/2}\dfrac{\mathrm{Ai}\left(  {\nu^{2/3}\zeta\left(
t\right)  }\right)  \mathrm{Bi}\left(  {\nu^{2/3}\zeta\left(  t\right)
}\right)  }{t}dt}\left\{  {1+O\left(  {\dfrac{1}{\nu}}\right)  }\right\}  ,
\end{array}
\label{eq66}%
\end{equation}
\begin{equation}
I_{2}\left(  {\nu,\nu x}\right)  =\frac{2}{\nu^{2/3}}\int_{0}^{x}{\left(
{\frac{\zeta\left(  t\right)  }{1-t^{2}}}\right)  ^{1/2}\frac{\mathrm{Ai}%
^{2}\left(  {\nu^{2/3}\zeta\left(  t\right)  }\right)  }{t}dt}\left\{
{1+O\left(  {\frac{1}{\nu}}\right)  }\right\}  , \label{eq67}%
\end{equation}
and
\begin{equation}%
\begin{array}
[c]{l}%
I_{3}\left(  {\nu,\nu x}\right)  =\dfrac{2}{\nu^{2/3}}\int_{x}^{1}{\left(
{\dfrac{\zeta\left(  t\right)  }{1-t^{2}}}\right)  ^{1/2}\dfrac{\mathrm{Bi}%
^{2}\left(  {\nu^{2/3}\zeta\left(  t\right)  }\right)  }{t}dt}\left\{
{1+O\left(  {\dfrac{1}{\nu}}\right)  }\right\}  -\operatorname{Re}L\left(
{\nu,\nu}\right) \\
+\dfrac{1}{\nu^{2/3}}\int_{1}^{\infty}{\left(  {\dfrac{\zeta\left(  t\right)
}{1-t^{2}}}\right)  ^{1/2}\dfrac{\mathrm{Ai}^{2}\left(  {\nu^{2/3}\zeta\left(
t\right)  }\right)  +\mathrm{Bi}^{2}\left(  {\nu^{2/3}\zeta\left(  t\right)
}\right)  }{t}dt}\left\{  {1+O\left(  {\dfrac{1}{\nu}}\right)  }\right\}  .
\end{array}
\label{eq68}%
\end{equation}
For large $\nu$, uniformly for $1\leq x<\infty$ ($-\infty<\zeta\leq0)$
\begin{equation}
I_{1}\left(  {\nu,\nu x}\right)  =\operatorname{Im}L\left(  {\nu,\nu
x}\right)  , \label{eq69}%
\end{equation}%
\begin{equation}%
\begin{array}
[c]{l}%
I_{2}\left(  {\nu,\nu x}\right)  =\dfrac{1}{2\nu}-\operatorname{Re}L\left(
{\nu,\nu x}\right) \\
-\dfrac{1}{\nu^{2/3}}\int_{x}^{\infty}{\left(  {\dfrac{\zeta\left(  t\right)
}{1-t^{2}}}\right)  ^{1/2}\dfrac{\mathrm{Ai}^{2}\left(  {\nu^{2/3}\zeta\left(
t\right)  }\right)  +\mathrm{Bi}^{2}\left(  {\nu^{2/3}\zeta\left(  t\right)
}\right)  }{t}dt}\left\{  {1+O\left(  {\dfrac{1}{\nu}}\right)  }\right\}  ,
\end{array}
\label{eq70}%
\end{equation}
and
\begin{equation}%
\begin{array}
[c]{l}%
I_{3}\left(  {\nu,\nu x}\right)  =-\operatorname{Re}L\left(  {\nu,\nu
x}\right) \\
+\dfrac{1}{\nu^{2/3}}\int_{x}^{\infty}{\left(  {\dfrac{\zeta\left(  t\right)
}{1-t^{2}}}\right)  ^{1/2}\dfrac{\mathrm{Ai}^{2}\left(  {\nu^{2/3}\zeta\left(
t\right)  }\right)  +\mathrm{Bi}^{2}\left(  {\nu^{2/3}\zeta\left(  t\right)
}\right)  }{t}dt}\left\{  {1+O\left(  {\dfrac{1}{\nu}}\right)  }\right\}  .
\end{array}
\label{eq71}%
\end{equation}
In these, $L\left(  {\nu,\nu x}\right)  $ is approximated by (\ref{eq64}).
\end{theorem}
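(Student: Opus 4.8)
The plan is to express all three integrals through the single complex quantity $L(\nu,\nu x)$ of Lemma 4.1 and through direct substitution of the Airy approximations (\ref{eq53}) and (\ref{eq54}), after the rescaling $s=\nu t$ already used in (\ref{eq65}). The algebraic engine is the pair $\{H_\nu^{(1)}\}^2=(J_\nu^2-Y_\nu^2)+2iJ_\nu Y_\nu$ and $H_\nu^{(1)}H_\nu^{(2)}=J_\nu^2+Y_\nu^2$, valid for $\nu>0$ and real positive argument (where $H_\nu^{(2)}=\overline{H_\nu^{(1)}}$); these give
\[
J_\nu Y_\nu=\tfrac12\operatorname{Im}\{H_\nu^{(1)}\}^2,\quad J_\nu^2=\tfrac12\bigl(H_\nu^{(1)}H_\nu^{(2)}+\operatorname{Re}\{H_\nu^{(1)}\}^2\bigr),\quad Y_\nu^2=\tfrac12\bigl(H_\nu^{(1)}H_\nu^{(2)}-\operatorname{Re}\{H_\nu^{(1)}\}^2\bigr).
\]
Dividing by $t$ and integrating over $[x,\infty)$ shows, exactly, that $I_1(\nu,\nu x)=\operatorname{Im}L(\nu,\nu x)$ and that $\int_x^\infty t^{-1}\operatorname{Re}\{H_\nu^{(1)}(\nu t)\}^2\,dt=2\operatorname{Re}L(\nu,\nu x)$. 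The first of these is precisely (\ref{eq69}), and the second reduces (\ref{eq70}) and (\ref{eq71}) to the single non-oscillatory combination $\tfrac12\int_x^\infty t^{-1}\bigl(J_\nu^2+Y_\nu^2\bigr)(\nu t)\,dt$.

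For the second step I would substitute (\ref{eq53})--(\ref{eq54}) into the integrands. Since the product of the two Airy prefactors equals $2\nu^{-2/3}\bigl(\zeta/(1-t^2)\bigr)^{1/2}$, the combinations $J_\nu^2$, $Y_\nu^2$, $J_\nu Y_\nu$ and $J_\nu^2+Y_\nu^2$ become $2\nu^{-2/3}\bigl(\zeta/(1-t^2)\bigr)^{1/2}$ times $\mathrm{Ai}^2$, $\mathrm{Bi}^2$, $-\mathrm{Ai}\,\mathrm{Bi}$ and $\mathrm{Ai}^2+\mathrm{Bi}^2$ respectively, all with argument $\nu^{2/3}\zeta(t)$. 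For $x\ge1$ this turns the combination above into the $\mathrm{Ai}^2+\mathrm{Bi}^2$ integral of (\ref{eq70})--(\ref{eq71}); together with the rescaled (\ref{eq10}), namely $\int_0^\infty t^{-1}J_\nu^2(\nu t)\,dt=1/(2\nu)$, and the $\operatorname{Re}L$ identity, this yields (\ref{eq70}) and (\ref{eq71}), while (\ref{eq69}) is already in hand. For $0<x\le1$ the integral $I_2$ lies entirely below the turning point and is handled directly, giving (\ref{eq67}); for $I_1$ and $I_3$ I would split at $t=1$, using on $[x,1]$ (where $\zeta\ge0$) the real Airy approximations to supply the $\mathrm{Ai}\,\mathrm{Bi}$ and $\mathrm{Bi}^2$ integrals of (\ref{eq66}) and (\ref{eq68}), and treating the tails $\int_1^\infty$ exactly as in the case $x\ge1$, producing $\operatorname{Im}L(\nu,\nu)$ in (\ref{eq66}) and $-\operatorname{Re}L(\nu,\nu)$ together with the $\mathrm{Ai}^2+\mathrm{Bi}^2$ tail in (\ref{eq68}). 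In every case the final move is to replace $L$ by its representation (\ref{eq64}).

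The step I expect to be the main obstacle is the uniform control of the relative error incurred by dropping $\varepsilon_1,\varepsilon_2$ from the products. I would show that each cross term costs only $\{1+O(1/\nu)\}$ uniformly by combining (\ref{eq55})--(\ref{eq56}) with the identities $E\,\mathrm{Ai}=(\mathrm{Ai}\,\mathrm{Bi})^{1/2}=\mathrm{Bi}/E$, $M/E=\sqrt2\,\mathrm{Ai}$ and $EM=\sqrt2\,\mathrm{Bi}$ on $[c,\infty)$ (and $E\equiv1$, $M^2=\mathrm{Ai}^2+\mathrm{Bi}^2$ on $(-\infty,c]$); for instance $\mathrm{Ai}\,\varepsilon_2=O(\nu^{-1})\mathrm{Ai}\,(M/E)=O(\nu^{-1})\mathrm{Ai}^2$ and $\mathrm{Bi}\,\varepsilon_1=O(\nu^{-1})\mathrm{Bi}\,(EM)=O(\nu^{-1})\mathrm{Bi}^2$, so each cross term is bounded by $O(1/\nu)$ times the corresponding main term. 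The two remaining delicate points are the endpoints. At $t\to0^+$ (so $\zeta\to\infty$) one must verify, using (\ref{eq52}), that the exponential decay of $\mathrm{Ai}^2(\nu^{2/3}\zeta)$ overwhelms the $1/t$ singularity, so that the $I_2$ integral and its error both converge; at $t\to\infty$ (so $\zeta\to-\infty$) one must verify, using (\ref{eq51}) and (\ref{eq46}), that the integrand $t^{-1}\bigl(\zeta/(1-t^2)\bigr)^{1/2}M^2$ decays like $t^{-2}$, so that the tails converge. Finally, one must check that crossing $t=1$, where the branch of $\bigl(\zeta/(1-t^2)\bigr)^{1/2}$ and the form of $E$, $M$ change, costs no uniformity; this is inherited from the stated uniform validity of (\ref{eq53})--(\ref{eq54}) on $0<x<\infty$.
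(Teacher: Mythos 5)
Your proposal is correct and follows essentially the same route as the paper: rescale $s=\nu t$, use the Hankel-function decompositions $J_\nu Y_\nu=\tfrac12\operatorname{Im}\{H_\nu^{(1)}\}^2$, $J_\nu^2$, $Y_\nu^2=\tfrac12\bigl(H_\nu^{(1)}H_\nu^{(2)}\pm\operatorname{Re}\{H_\nu^{(1)}\}^2\bigr)$ to isolate the oscillatory part as $L$, invoke (\ref{eq10}) for $I_2$ when $x\geq1$, split at the turning point $t=1$ when $x<1$, and substitute the Airy approximations (\ref{eq53})--(\ref{eq54}) into the remaining non-oscillatory integrands. Your explicit justification of the uniform $\{1+O(1/\nu)\}$ relative error via the $E$ and $M$ identities is a detail the paper leaves implicit, but it is correct and does not change the argument.
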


\textbf{Remark}. The Airy functions in the integrals appearing in
(\ref{eq66}), (\ref{eq67}), and the first integral on the RHS of (\ref{eq68})
are all non-oscillatory, since their arguments are nonnegative. From
(\ref{eq46}) we also note that
\[
\mathrm{Ai}^{2}\left(  {\nu^{2/3}\zeta}\right)  +\mathrm{Bi}^{2}\left(
{\nu^{2/3}\zeta}\right)  \sim1/\left(  {\pi\nu^{1/3}\left\vert \zeta
\right\vert ^{1/2}}\right)  \quad\left(  {\nu^{2/3}\zeta\rightarrow-\infty
}\right)  ,
\]
and indeed, as seen (for example) from [14, Eq. (9.11.4)], this function is
monotonic. Therefore the second integral on the RHS of (\ref{eq68}), as well
as integrals appearing in (\ref{eq70}) and (\ref{eq71}) are slowly varying,
and consequently all the integrals in question are numerically stable.

\begin{proof}
Making the change of variable $s=\nu t$ in (\ref{eq60}) we have
\begin{equation}
I_{1}\left(  {\nu,\nu x}\right)  =\int_{x}^{\infty}{\frac{J_{\nu}\left(  {\nu
t}\right)  Y_{\nu}\left(  {\nu t}\right)  }{t}dt}, \label{eq73}%
\end{equation}
(with similar representations used for $I_{2}\left(  {\nu,\nu x}\right)  $ and
$I_{3}\left(  {\nu,\nu x}\right)  )$. We next use $H_{\nu}^{\left(  1\right)
}\left(  t\right)  =J_{\nu}\left(  t\right)  +iY_{\nu}\left(  t\right)  $ to
give the relation
\[
J_{\nu}\left(  t\right)  Y_{\nu}\left(  t\right)  ={\frac{1}{2}}%
\operatorname{Im}\left\{  {\left(  {H_{\nu}^{\left(  1\right)  }\left(
t\right)  }\right)  ^{2}}\right\}  ,
\]
for $t$ positive. Then, using this in (\ref{eq73}) and referring to
(\ref{eq65}), we see that (\ref{eq69}) follows.

For $0<x\leq1$ we express $I_{1}\left(  {\nu,\nu x}\right)  $ in the form
\begin{equation}
I_{1}\left(  {\nu,\nu x}\right)  =\int_{x}^{1}{\frac{J_{\nu}\left(  {\nu
t}\right)  Y_{\nu}\left(  {\nu t}\right)  }{t}dt}+\int_{1}^{\infty}%
{\frac{J_{\nu}\left(  {\nu t}\right)  Y_{\nu}\left(  {\nu t}\right)  }{t}dt}.
\label{eq75}%
\end{equation}
The first integral here has a desired monotonic integrand, and using
(\ref{eq53}) and (\ref{eq54}) it can be approximated by%
\begin{equation}%
\begin{array}
[c]{l}%
\int_{x}^{1}{\dfrac{J_{\nu}\left(  {\nu t}\right)  Y_{\nu}\left(  {\nu
t}\right)  }{t}dt}\\
=-\dfrac{2}{\nu^{2/3}}\int_{x}^{1}{\left(  {\dfrac{\zeta\left(  t\right)
}{1-t^{2}}}\right)  ^{1/2}\dfrac{\mathrm{Ai}\left(  {\nu^{2/3}\zeta\left(
t\right)  }\right)  \mathrm{Bi}\left(  {\nu^{2/3}\zeta\left(  t\right)
}\right)  }{t}dt}\left\{  {1+O\left(  {\dfrac{1}{\nu}}\right)  }\right\}  .
\end{array}
\label{eq76}%
\end{equation}
For the second integral, we have from (\ref{eq69}),
\begin{equation}
\int_{1}^{\infty}{\frac{J_{\nu}\left(  {\nu t}\right)  Y_{\nu}\left(  {\nu
t}\right)  }{t}dt}=I_{1}\left(  {\nu,\nu}\right)  =\operatorname{Im}L\left(
{\nu,\nu}\right)  . \label{eq77}%
\end{equation}
Thus (\ref{eq75}), (\ref{eq76}) and (\ref{eq77}) establishes (\ref{eq66}).

Similarly to (\ref{eq76}), we use (\ref{eq54}) in (\ref{eq61}) to obtain
(\ref{eq67}) for the case $0<x\leq1$. For $1\leq x<\infty$ we use (\ref{eq10})
to rewrite
\begin{equation}
I_{2}\left(  {\nu,\nu x}\right)  =\frac{1}{2\nu}-\int_{x}^{\infty}%
{\frac{J_{\nu}^{2}\left(  {\nu t}\right)  }{t}dt}. \label{eq78}%
\end{equation}
Next, in this we use
\[
J_{\nu}^{2}\left(  t\right)  ={\frac{1}{2}}\operatorname{Re}\left\{  {\left(
{H_{\nu}^{\left(  1\right)  }\left(  t\right)  }\right)  ^{2}}\right\}
+{\frac{1}{2}}H_{\nu}^{\left(  1\right)  }\left(  t\right)  H_{\nu}^{\left(
2\right)  }\left(  t\right)  ,
\]
to arrive at
\begin{equation}
\int_{x}^{\infty}{\frac{J_{\nu}^{2}\left(  {\nu t}\right)  }{t}dt}%
=\operatorname{Re}\int_{x}^{\infty}{\frac{\left\{  {H_{\nu}^{\left(  1\right)
}\left(  {\nu t}\right)  }\right\}  ^{2}}{2t}dt}+\int_{x}^{\infty}%
{\frac{J_{\nu}^{2}\left(  {\nu t}\right)  +Y_{\nu}^{2}\left(  {\nu t}\right)
}{2t}dt}. \label{eq80}%
\end{equation}
Thus (\ref{eq70}) follows from (\ref{eq53}), (\ref{eq54}), (\ref{eq63}),
(\ref{eq78}) and (\ref{eq80}).

Similarly to (\ref{eq80}), we use (\ref{eq62}) along with
\[
Y_{\nu}^{2}\left(  t\right)  ={\frac{1}{2}}H_{\nu}^{\left(  1\right)  }\left(
t\right)  H_{\nu}^{\left(  2\right)  }\left(  t\right)  -{\frac{1}{2}%
}\operatorname{Re}\left\{  {\left(  {H_{\nu}^{\left(  1\right)  }\left(
t\right)  }\right)  ^{2}}\right\}  ,
\]
to arrive at (\ref{eq71}).

Finally, for $0<x<1$, we write
\[
I_{3}\left(  {\nu,\nu x}\right)  =\int_{x}^{1}{\frac{Y_{\nu}^{2}\left(  {\nu
t}\right)  }{t}dt}+\int_{1}^{\infty}{\frac{Y_{\nu}^{2}\left(  {\nu t}\right)
}{t}dt}.
\]
The approximation (\ref{eq68}) then follows from using (\ref{eq53}) in the
first of these integrals, and using (\ref{eq71}) (with $x=1)$ for the second integral.
\end{proof}

\bigskip As an illustration, let $\nu=100$ and $x=0.5$. The exact values that
follow are obtained using the \textit{fdiff} routine in MAPLE with sufficient
precision. From (\ref{eq58}) we obtain the approximation $\hat{{J}}%
_{100}\left(  {50}\right)  \approx$ $-1.47735\times10^{-21}$, compared to the
exact value $\hat{{J}}_{100}\left(  {50}\right)  =$ $-1.47702...\times
10^{-21}$. Similarly, from (\ref{eq59}), we obtain the approximation $\hat
{{Y}}_{100}\left(  {50}\right)  \approx-4.31473\times10^{18}$, compared to the
value $\hat{{Y}}_{100}\left(  {50}\right)  =-4.31569\times10^{18}$.

Likewise, if we choose $\nu=100$ and $x=5$, we obtain the approximations
$\hat{{J}}_{100}\left(  {500}\right)  $\newline$\approx0.0150731$ and
$\hat{{Y}}_{100}\left(  {500}\right)  \approx-0.0470087$. In comparison, the
exact values are $\hat{{J}}_{100}\left(  {500}\right)  =0.0150695\cdots$ and
$\hat{{Y}}_{100}\left(  {500}\right)  =-0.0470099\cdots$.

\section{Asymptotic approximations of the integrals appearing in Theorem 4.2}

The integrals in Theorem 4.2 involve Airy functions, and as such it is
desirable to have explicit asymptotic approximations for them. These will
involve the following integrals
\begin{equation}
G_{i,j}^{k}\left(  {\nu,\zeta}\right)  =\int{\zeta^{k}y_{i}\left(  {\nu
^{2/3}\zeta}\right)  y_{j}\left(  {\nu^{2/3}\zeta}\right)  d\zeta},
\label{eq83}%
\end{equation}
for $i=1,2$, $j=1,2$, and $k=0,1,2$; here $y_{1}\left(  t\right)
=\mathrm{Ai}\left(  t\right)  $ and $y_{2}\left(  t\right)  =\mathrm{Bi}%
\left(  t\right)  $. The key is that these integrals can be explicitly
evaluated, and in particular from [2] we have (with $i$ and $j$ not
necessarily distinct)
\begin{equation}
G_{i,j}^{0}\left(  {\nu,\zeta}\right)  =\zeta y_{i}\left(  {\nu^{2/3}\zeta
}\right)  y_{j}\left(  {\nu^{2/3}\zeta}\right)  -\nu^{-2/3}y_{i}^{\prime
}\left(  {\nu^{2/3}\zeta}\right)  y_{j}^{\prime}\left(  {\nu^{2/3}\zeta
}\right)  , \label{eq84}%
\end{equation}%
\begin{equation}%
\begin{array}
[c]{l}%
G_{i,j}^{1}\left(  {\nu,\zeta}\right)  ={\frac{1}{3}}\zeta^{2}y_{i}\left(
{\nu^{2/3}\zeta}\right)  y_{j}\left(  {\nu^{2/3}\zeta}\right)  -{\frac{1}{3}%
}\nu^{-2/3}\zeta y_{i}^{\prime}\left(  {\nu^{2/3}\zeta}\right)  y_{j}^{\prime
}\left(  {\nu^{2/3}\zeta}\right) \\
+{\frac{1}{6}}\nu^{-4/3}\left\{  {y_{i}\left(  {\nu^{2/3}\zeta}\right)
y_{j}^{\prime}\left(  {\nu^{2/3}\zeta}\right)  +y_{i}^{\prime}\left(
{\nu^{2/3}\zeta}\right)  y_{j}\left(  {\nu^{2/3}\zeta}\right)  }\right\}  ,
\end{array}
\label{eq85}%
\end{equation}
and
\begin{equation}%
\begin{array}
[c]{l}%
G_{i,j}^{2}\left(  {\nu,\zeta}\right)  =-{\frac{1}{5}}\nu^{-2/3}\zeta^{2}%
y_{i}^{\prime}\left(  {\nu^{2/3}\zeta}\right)  y_{j}^{\prime}\left(
{\nu^{2/3}\zeta}\right) \\
+{\frac{1}{5}}\nu^{-4/3}\zeta\left\{  {y_{i}\left(  {\nu^{2/3}\zeta}\right)
y_{j}^{\prime}\left(  {\nu^{2/3}\zeta}\right)  +y_{i}^{\prime}\left(
{\nu^{2/3}\zeta}\right)  y_{j}\left(  {\nu^{2/3}\zeta}\right)  }\right\} \\
+{\frac{1}{5}}\left(  {\zeta^{3}-\nu^{-2}}\right)  y_{i}\left(  {\nu
^{2/3}\zeta}\right)  y_{j}\left(  {\nu^{2/3}\zeta}\right)  .
\end{array}
\label{eq86}%
\end{equation}
Next let
\begin{equation}
a_{0}=2^{-2/3},\quad a_{1}={\frac{2}{5}},\quad a_{2}={\frac{3}{{35}}}2^{2/3},
\label{eq87}%
\end{equation}
these being the first three coefficients in the Maclaurin expansion
\begin{equation}
\frac{\zeta}{1-x^{2}}=\sum\limits_{k=0}^{\infty}{a_{k}\zeta^{k}},\quad\left(
{\left\vert \zeta\right\vert <\left(  {{\frac{3}{2}}\pi}\right)  ^{2/3}%
}\right)  ; \label{eq88}%
\end{equation}
see [13, Chap. 11, \S 10]. Finally, let $x_{0}=1-\nu_{\,}^{-1/3}%
$(correspondingly $\zeta_{0}=\zeta\left(  {x_{0}}\right)  $ $=\left(  {2/\nu
}\right)  ^{1/3}+O\left(  {\nu^{-2/3}}\right)  )$, and $x_{1}=1+\nu
_{\,}^{-1/3} $(correspondingly $\zeta_{1}=\zeta\left(  {x_{1}}\right)
=-\left(  {2/\nu}\right)  ^{1/3}+O\left(  {\nu^{-2/3}}\right)  )$.

The desired results are stated as follows.

\begin{theorem}
For large $\nu$, uniformly for $0<x\leq1$ ($0\leq\zeta<\infty)$
\begin{equation}%
\begin{array}
[c]{l}%
\int_{x}^{1}{\left(  {\dfrac{\zeta\left(  t\right)  }{1-t^{2}}}\right)
^{1/2}\dfrac{\mathrm{Ai}\left(  {\nu^{2/3}\zeta\left(  t\right)  }\right)
\mathrm{Bi}\left(  {\nu^{2/3}\zeta\left(  t\right)  }\right)  }{t}dt}\\
=\left\{  {1+O\left(  {\dfrac{1}{\nu}}\right)  }\right\}  \sum\limits_{k=0}%
^{2}{a_{k}\left[  {G_{1,2}^{k}\left(  {\nu,\min\left\{  {\zeta,\zeta_{0}%
}\right\}  }\right)  -G_{1,2}^{k}\left(  {\nu,0}\right)  }\right]  }\\
+\dfrac{1}{2\pi\nu^{1/3}}\left\{  {1+O\left(  {\dfrac{1}{\nu}}\right)
}\right\}  \max\left\{  {\cosh^{-1}\left(  {\dfrac{1}{x}}\right)  -\cosh
^{-1}\left(  {\dfrac{1}{x_{0}}}\right)  ,0}\right\}  ,
\end{array}
\label{eq89}%
\end{equation}%
\begin{equation}
\int_{0}^{x}{\left(  {\frac{\zeta\left(  t\right)  }{1-t^{2}}}\right)
^{1/2}\frac{\mathrm{Ai}^{2}\left(  {\nu^{2/3}\zeta\left(  t\right)  }\right)
}{t}dt}=-\left(  {\frac{\zeta}{1-x^{2}}}\right)  G_{1,1}^{0}\left(  {\nu
,\zeta}\right)  \left\{  {1+O\left(  {\frac{1}{\nu}}\right)  }\right\}  ,
\label{eq90}%
\end{equation}
and%
\begin{equation}%
\begin{array}
[c]{l}%
\int_{x}^{1}{\left(  {\dfrac{\zeta\left(  t\right)  }{1-t^{2}}}\right)
^{1/2}\dfrac{\mathrm{Bi}^{2}\left(  {\nu^{2/3}\zeta\left(  t\right)  }\right)
}{t}dt}\\
=\left[  {\left(  {\dfrac{\zeta}{1-x^{2}}}\right)  G_{2,2}^{0}\left(
{\nu,\zeta}\right)  -a_{0}G_{2,2}^{0}\left(  {\nu,0}\right)  }\right]
\left\{  {1+O\left(  {\dfrac{1}{\nu}}\right)  }\right\}
\end{array}
\label{eq91a}%
\end{equation}
In (\ref{eq89}) the nonnegative branch of the inverse hyperbolic cosine is taken.

For large $\nu$, uniformly for $1\leq x<\infty$ ($-\infty<\zeta\leq0)$
\begin{equation}
L\left(  {\nu,\nu x}\right)  =\frac{4e^{-2\pi i/3}}{\nu^{2/3}}\left(
{\frac{\zeta}{1-x^{2}}}\right)  G_{1,1}^{0}\left(  {e^{\pi i/3}\nu,\zeta
}\right)  \left\{  {1+O\left(  {\frac{1}{\nu}}\right)  }\right\}  ,
\label{eq92}%
\end{equation}
and
\begin{equation}%
\begin{array}
[c]{l}%
\int_{x}^{\infty}{\left(  {\dfrac{\zeta\left(  t\right)  }{1-t^{2}}}\right)
^{1/2}\dfrac{\mathrm{Ai}^{2}\left(  {\nu^{2/3}\zeta\left(  t\right)  }\right)
+\mathrm{Bi}^{2}\left(  {\nu^{2/3}\zeta\left(  t\right)  }\right)  }{t}dt}\\
=\dfrac{1}{\pi\nu^{1/3}}\left\{  {1+O\left(  {\dfrac{1}{\nu}}\right)
}\right\}  \left[  \sin^{-1}\left(  {\dfrac{1}{\max\left\{  {x,x_{1}}\right\}
}}\right)  \right. \\
\left.  +\max\left\{  {\sum\limits_{k=0}^{2}{a_{k}\left[  {G_{1,1}^{k}\left(
{\nu,\zeta}\right)  +G_{2,2}^{k}\left(  {\nu,\zeta}\right)  -G_{1,1}%
^{k}\left(  {\nu,\zeta_{1}}\right)  -G_{2,2}^{k}\left(  {\nu,\zeta_{1}%
}\right)  }\right]  },0}\right\}  \right]
\end{array}
\label{eq93}%
\end{equation}
with the principal inverse sine taken in (\ref{eq93}).
\end{theorem}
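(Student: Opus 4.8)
The plan is to reduce every integral in the theorem to the explicitly integrable family $G_{i,j}^{k}$ by a single change of variable, and then to treat the resulting $\zeta$-integrals by one of two devices according to whether the relevant Airy product is exponentially monotonic or merely algebraically (slowly) varying. First I would differentiate the defining relations (\ref{eq49}) and (\ref{eq50}): writing $u=(1-t^{2})^{1/2}$ one finds $\zeta^{1/2}\,d\zeta/dt=-u/t$ (with the corresponding relation from (\ref{eq50}) for $t>1$), so the universal combination occurring in every integrand collapses exactly, $(\zeta/(1-t^{2}))^{1/2}\,t^{-1}\,dt=-(\zeta/(1-t^{2}))\,d\zeta$. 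Substituting this and recalling the analytic expansion $\zeta/(1-t^{2})=\sum_{k}a_{k}\zeta^{k}$ from (\ref{eq88}), each integral becomes $\mp\int(\sum_{k}a_{k}\zeta^{k})\,y_{i}(\nu^{2/3}\zeta)y_{j}(\nu^{2/3}\zeta)\,d\zeta$, whose term-by-term primitive is precisely $\sum_{k}a_{k}G_{i,j}^{k}$ by (\ref{eq84})--(\ref{eq86}). This change of variable is exact; all error enters only through the two approximation steps below.

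The first device handles the exponentially monotonic integrands: $\mathrm{Ai}^{2}$ (decaying for $\zeta>0$) in (\ref{eq90}), $\mathrm{Bi}^{2}$ (growing for $\zeta>0$) in (\ref{eq91a}), and $\mathrm{Ai}^{2}(e^{2\pi i/3}\nu^{2/3}\zeta)$ (decaying along the contour of Lemma 4.1) in (\ref{eq92}). For these the $\zeta$-integral concentrates at the dominant endpoint, so I would integrate by parts once using the exact primitive $G_{i,j}^{0}$ of $y_{i}y_{j}$: the boundary term reproduces $(\zeta/(1-t^{2}))\,G_{i,j}^{0}$ at the endpoints, while the remainder $\int(\zeta/(1-t^{2}))'\,G_{i,j}^{0}\,d\zeta$ is of relative order $\nu^{-1}$ because $G_{i,j}^{0}$ also concentrates at that endpoint over a $\zeta$-scale $O(\nu^{-1})$. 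In the $\mathrm{Ai}^{2}$ case the far endpoint drops out since the Airy factor vanishes there, giving (\ref{eq90}); in the $\mathrm{Bi}^{2}$ case it is instead the endpoint $\zeta=0$ that survives, with prefactor value $a_{0}$, producing the $-a_{0}G_{2,2}^{0}(\nu,0)$ term in (\ref{eq91a}). For (\ref{eq92}) I would start from (\ref{eq64}), carry out the same (now complex) change of variable, and identify the primitive of $\mathrm{Ai}^{2}(e^{2\pi i/3}\nu^{2/3}\zeta)$ with the rotated $G_{1,1}^{0}$ displayed there, the endpoint at $i\infty$ contributing nothing by the exponential smallness noted after Lemma 4.1.

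The second device handles the slowly varying, modulus-type products $\mathrm{Ai}\,\mathrm{Bi}\sim\{2\pi(\nu^{2/3}\zeta)^{1/2}\}^{-1}$ for $\zeta>0$ in (\ref{eq89}) and $\mathrm{Ai}^{2}+\mathrm{Bi}^{2}\sim\{\pi|\nu^{2/3}\zeta|^{1/2}\}^{-1}$ for $\zeta<0$ in (\ref{eq93}). These do not localize, so I would split the $t$-interval at the turning-point-layer edge $x_{0}$ (respectively $x_{1}$), with $\zeta_{0}=\zeta(x_{0})\sim(2/\nu)^{1/3}$. On the inner layer the Maclaurin series (\ref{eq88}) converges and I integrate term by term via the $G^{k}$; since $\zeta_{0}=O(\nu^{-1/3})$ one checks the contribution of $\zeta^{k}$ to the layer integral is of order $\nu^{-(k+3/2)/3}$, i.e. each extra power costs a relative factor $\nu^{-1/3}$, so retaining $k=0,1,2$ leaves relative error $O(\nu^{-1})$ and produces the $\sum_{k=0}^{2}a_{k}[\cdots]$ brackets. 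On the outer region I substitute the leading algebraic asymptotics (\ref{eq45}), (\ref{eq46}); the $\zeta$-powers cancel against $(\zeta/(1-t^{2}))^{1/2}$ and the integral reduces to the elementary $\int dt/(t(1-t^{2})^{1/2})=-\cosh^{-1}(1/t)$ for $\zeta>0$ and $\int dt/(t(t^{2}-1)^{1/2})=-\sin^{-1}(1/t)$ for $\zeta<0$, yielding the $\cosh^{-1}$ and $\sin^{-1}$ terms. The $\min\{\zeta,\zeta_{0}\}$ and the $\max\{\cdots,0\}$ simply record whether $x$ lies outside or inside the layer.

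Finally, the main obstacle is the uniform $O(\nu^{-1})$ control right across $0<x<\infty$, and in particular the stitching of the inner and outer pieces at $x_{0}$, $x_{1}$ together with the passage through the turning point. The integration-by-parts estimate of the remainder is cleanly $O(\nu^{-1})$ only while the active endpoint stays a fixed distance from $x=1$; inside the $O(\nu^{-1/3})$ layer the concentration scale widens to $O(\nu^{-2/3})$ and the error terms, along with the higher-order contributions of the uniform Bessel expansions (\ref{eq55}), (\ref{eq56}) that were suppressed, must be bounded carefully to confirm the stated order uniformly. That matching argument, rather than any individual evaluation of a $G_{i,j}^{k}$, is where the real work lies.
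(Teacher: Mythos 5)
Your proposal follows essentially the same route as the paper: the same exact change of variable $(\zeta/(1-t^{2}))^{1/2}t^{-1}\,dt=-(\zeta/(1-t^{2}))\,d\zeta$, integration by parts against the primitive $G_{i,j}^{0}$ for the exponentially monotonic integrands in (\ref{eq90}), (\ref{eq91a}), (\ref{eq92}) (the paper carries this out in detail for (\ref{eq92}) via the bounds (\ref{eq98})--(\ref{eq101})), and for (\ref{eq89}) and (\ref{eq93}) the same splitting at $x_{0}$, $x_{1}$ with the truncated Maclaurin expansion (\ref{eq88}) integrated term by term on the turning-point layer and the Airy asymptotics reducing the outer piece to the elementary $\cosh^{-1}$ and $\sin^{-1}$ integrals. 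Your closing remark correctly identifies the uniformity of the error across the layer boundaries as the point requiring the most care; the argument is sound and matches the paper's.
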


\begin{proof}
\bigskip\ For the integrals (\ref{eq90}), (\ref{eq91a}), and (\ref{eq92}) that
have rapidly changing integrands (i.e. exponentially decreasing or
increasing), we use integration by parts. We consider the latter, with other
two results being similarly proven. Applying integration by parts to the
integral appearing in (\ref{eq64}) we obtain
\begin{equation}%
\begin{array}
[c]{l}%
\int_{\infty e^{-2\pi i/3}}^{\zeta}{\left(  {\dfrac{\eta}{1-x\left(
\eta\right)  ^{2}}}\right)  \mathrm{Ai}^{2}\left(  {e^{2\pi i/3}\nu^{2/3}\eta
}\right)  d\eta}=\left(  {\dfrac{\zeta}{1-x^{2}}}\right)  G_{1,1}^{0}\left(
{e^{\pi i/3}\nu,\zeta}\right) \\
-\int_{\infty e^{-2\pi i/3}}^{\zeta}{\left(  {\dfrac{\eta}{1-x\left(
\eta\right)  ^{2}}}\right)  ^{\prime}G_{1,1}^{0}\left(  {e^{\pi i/3}\nu,\eta
}\right)  d\eta}.
\end{array}
\label{eq94}%
\end{equation}
Here the prime represents differentiation with respect to $\eta$, and from
(\ref{eq84}) we have
\begin{equation}
G_{1,1}^{0}\left(  {e^{\pi i/3}\nu,\zeta}\right)  =\zeta\mathrm{Ai}^{2}\left(
{e^{2\pi i/3}\nu^{2/3}\zeta}\right)  -e^{-2\pi i/3}\nu^{-2/3}\mathrm{Ai}%
^{\prime2}\left(  {e^{2\pi i/3}\nu^{2/3}\zeta}\right)  . \label{eq95}%
\end{equation}
Now the first of (\ref{eq45}) holds for complex $x$ with $\left\vert
{\arg\left(  x\right)  }\right\vert \leq\pi-\delta$ ($\delta>0)$ and the
principal value of $x^{3/2}$ taken. Then, from that equation, along with
(\ref{eq83}) and (\ref{eq95}), we can readily show that $G_{1,1}^{0}\left(
{e^{\pi i/3}\nu,\zeta}\right)  $ is non-vanishing for nonnegative $\zeta$,
with
\[
G_{1,1}^{0}\left(  {e^{\pi i/3}\nu,\zeta}\right)  =-\frac{e^{-2\pi i/3}%
3^{1/3}\Gamma^{2}\left(  {{\frac{2}{3}}}\right)  }{4\pi^{2}\nu^{2/3}}+O\left(
\zeta\right)  ,
\]
as $\zeta\rightarrow0$, and
\[
G_{1,1}^{0}\left(  {e^{\pi i/3}\nu,\zeta}\right)  =-\frac{e^{-4\pi i/3}%
\exp\left\{  {{\frac{4}{3}}\nu\zeta^{3/2}}\right\}  }{8\pi\nu^{4/3}\zeta
}\left\{  {1+O\left(  {\frac{1}{\nu^{1/3}\zeta^{3/2}}}\right)  }\right\}  ,
\]
as $\nu\zeta^{3/2}\rightarrow\infty$ such that $\left\vert {\arg\left(
{e^{2\pi i/3}\zeta}\right)  }\right\vert \leq\pi-\delta$. Hence we can assert
that
\begin{equation}
K_{1}\frac{\left\vert {\exp\left\{  {{\frac{4}{3}}\nu\zeta^{3/2}}\right\}
}\right\vert }{\nu^{4/3}\left\vert \zeta\right\vert +\nu^{2/3}}\leq\left\vert
{G_{1,1}^{0}\left(  {e^{\pi i/3}\nu,\zeta}\right)  }\right\vert \leq
K_{2}\frac{\left\vert {\exp\left\{  {{\frac{4}{3}}\nu\zeta^{3/2}}\right\}
}\right\vert }{\nu^{4/3}\left\vert \zeta\right\vert +\nu^{2/3}}, \label{eq98}%
\end{equation}
for some positive constants $K_{1}$ and $K_{2}$. Next, from the complex
variable extension of (\ref{eq49}) it is straightforward to show that
\begin{equation}
\left(  {\frac{\zeta}{1-x^{2}}}\right)  =O\left(  {\frac{1}{\zeta^{2}}%
}\right)  ,\quad\frac{d}{d\zeta}\left(  {\frac{\zeta}{1-x^{2}}}\right)
=O\left(  {\frac{1}{\zeta^{3}}}\right)  , \label{eq99}%
\end{equation}
as $\zeta\rightarrow\infty$ in the sector $\left\vert {\arg\left(  {-\zeta
}\right)  }\right\vert <2\pi/3$. Therefore from this (with $x$, $\zeta$
replaced by $x\left(  \eta\right)  $, $\eta$, respectively), and the upper
bound in (\ref{eq98}), we have for large $\nu$%
\[%
\begin{array}
[c]{l}%
\int_{\infty e^{-2\pi i/3}}^{\zeta}{\left(  {\dfrac{\eta}{1-x\left(
\eta\right)  ^{2}}}\right)  ^{\prime}G_{1,1}^{0}\left(  {e^{\pi i/3}\nu,\eta
}\right)  d\eta}\\
=O\left(  1\right)  \int_{\infty e^{-2\pi i/3}}^{\zeta}{\dfrac{\left\vert
{\exp\left\{  {{\frac{4}{3}}\nu\eta^{3/2}}\right\}  }\right\vert }{\left(
{\nu^{4/3}\left\vert \eta\right\vert +\nu^{2/3}}\right)  \left(  {\left\vert
\eta\right\vert +1}\right)  ^{3}}d\left\vert \eta\right\vert },
\end{array}
\]
uniformly for $-\infty<\zeta\leq0$. Now express $\eta=re^{-i\theta\left(
r\right)  }$ on the contour, where $r=\left\vert \eta\right\vert $, and
$\theta\left(  r\right)  $ decreases continuously from $\pi$ to $2\pi/3$ as
$r$ increases from $\left\vert \zeta\right\vert $ to $\infty$. From Laplace's
method ([13, Chap. 3, \S 7]), we arrive at
\begin{equation}%
\begin{array}
[c]{l}%
\int_{\infty e^{-2\pi i/3}}^{\zeta}{\left(  {\dfrac{\eta}{1-x\left(
\eta\right)  ^{2}}}\right)  ^{\prime}G_{1,1}^{0}\left(  {e^{\pi i/3}\nu,\eta
}\right)  d\eta}\\
=O\left(  1\right)  \int_{\left\vert \zeta\right\vert }^{\infty}{\dfrac
{\exp\left\{  {{\frac{4}{3}}\nu r^{3/2}\cos\left(  {{\frac{3}{2}}\theta\left(
r\right)  }\right)  }\right\}  }{\left(  {\nu^{4/3}r+\nu^{2/3}}\right)
\left(  {r+1}\right)  ^{3}}dr}\\
=O\left(  {\dfrac{1}{\nu^{5/3}\left(  {\nu^{2/3}\left\vert \zeta\right\vert
+1}\right)  \left(  {\left\vert \zeta\right\vert +1}\right)  ^{3}}}\right)  .
\end{array}
\label{eq101}%
\end{equation}
The result (\ref{eq92}) follows from (\ref{eq64}), (\ref{eq94}), (\ref{eq98}),
(\ref{eq99}) and (\ref{eq101}). The proofs of (\ref{eq90}) and (\ref{eq91a})
are similar.

Consider next the integrals with slowly varying (non-exponential) integrands.
The method of integration by parts is not effective in approximating these,
and instead we resort to a combination of a Maclaurin series expansion for
small values of the integration variable, coupled with asymptotic
approximations of Airy functions for the other unbounded values. To this end,
on using
\[
\frac{d\zeta}{dx}=-\frac{1}{x}\left(  {\frac{1-x^{2}}{\zeta}}\right)  ^{1/2},
\]
we express (\ref{eq89}) in the form
\begin{equation}%
\begin{array}
[c]{l}%
\int_{x}^{1}{\left(  {\dfrac{\zeta\left(  t\right)  }{1-t^{2}}}\right)
^{1/2}\dfrac{\mathrm{Ai}\left(  {\nu^{2/3}\zeta\left(  t\right)  }\right)
\mathrm{Bi}\left(  {\nu^{2/3}\zeta\left(  t\right)  }\right)  }{t}dt}\\
=\int_{0}^{\zeta_{0}}{\left(  {\dfrac{\eta}{1-x\left(  \eta\right)  ^{2}}%
}\right)  \mathrm{Ai}\left(  {\nu^{2/3}\eta}\right)  \mathrm{Bi}\left(
{\nu^{2/3}\eta}\right)  d\eta}\\
+\int_{x}^{x_{0}}{\left(  {\dfrac{\zeta\left(  t\right)  }{1-t^{2}}}\right)
^{1/2}\dfrac{\mathrm{Ai}\left(  {\nu^{2/3}\zeta\left(  t\right)  }\right)
\mathrm{Bi}\left(  {\nu^{2/3}\zeta\left(  t\right)  }\right)  }{t}dt}.
\end{array}
\label{eq103}%
\end{equation}
If $0\leq\zeta\leq\zeta_{0}$ ($x_{0}\leq x\leq1)$ the upper limit in the first
integral on the RHS is replaced by $\zeta$ and the second integral is null.

For the first integral on the RHS of (\ref{eq103}) we find from (\ref{eq88}),
and recalling $\zeta_{0}=\left(  {2/\nu}\right)  ^{1/3}+O\left(  {\nu^{-2/3}%
}\right)  $, that
\begin{equation}%
\begin{array}
[c]{l}%
\int_{0}^{\zeta_{0}}{\left(  {\dfrac{\eta}{1-x\left(  \eta\right)  ^{2}}%
}\right)  \mathrm{Ai}\left(  {\nu^{2/3}\eta}\right)  \mathrm{Bi}\left(
{\nu^{2/3}\eta}\right)  d\eta}\\
=\left\{  {1+O\left(  {\dfrac{1}{\nu}}\right)  }\right\}  \int_{0}^{\zeta_{0}%
}{\left\{  {a_{0}+a_{1}\eta+a_{2}\eta^{2}}\right\}  \mathrm{Ai}\left(
{\nu^{2/3}\eta}\right)  \mathrm{Bi}\left(  {\nu^{2/3}\eta}\right)  d\eta}.
\end{array}
\label{eq104}%
\end{equation}
Explicit integration and referring to (\ref{eq83}) yields the first term on
the RHS of (\ref{eq89}). Clearly this also holds if $\zeta_{0}$ is replaced by
$\zeta$ when $0\leq\zeta\leq\zeta_{0}$.

Next, from (\ref{eq45})
\[
\mathrm{Ai}\left(  {\nu^{2/3}\zeta}\right)  \mathrm{Bi}\left(  {\nu^{2/3}%
\zeta}\right)  =\frac{1}{2\pi\nu^{1/3}\zeta^{1/2}}\left\{  {1+O\left(
{\frac{1}{\nu^{2}\zeta^{3}}}\right)  }\right\}  ,
\]
as $\nu^{2/3}\zeta\rightarrow\infty$. Hence, assuming $0<x<x_{0}$ ($\zeta
_{0}<\zeta<\infty)$, we have $\zeta^{-1}=O\left(  {\nu^{1/3}}\right)  $ in the
second integral on the RHS of (\ref{eq103}), and so it follows that%
\begin{equation}%
\begin{array}
[c]{l}%
\int_{x}^{x_{0}}{\left(  {\dfrac{\zeta\left(  t\right)  }{1-t^{2}}}\right)
^{1/2}\dfrac{\mathrm{Ai}\left(  {\nu^{2/3}\zeta\left(  t\right)  }\right)
\mathrm{Bi}\left(  {\nu^{2/3}\zeta\left(  t\right)  }\right)  }{t}dt}\\
=\dfrac{1}{2\pi\nu^{1/3}}\left\{  {1+O\left(  {\dfrac{1}{\nu}}\right)
}\right\}  \int_{x}^{x_{0}}{\left(  {\dfrac{1}{1-t^{2}}}\right)  ^{1/2}%
\dfrac{1}{t}dt}.
\end{array}
\label{eq106a}%
\end{equation}
\newline Now for $0<t\leq1$
\begin{equation}
\int{\left(  {\frac{1}{1-t^{2}}}\right)  ^{1/2}\frac{1}{t}dt}=-\cosh
^{-1}\left(  {\frac{1}{t}}\right)  . \label{eq107}%
\end{equation}
On combining (\ref{eq106a}) and (\ref{eq107}) we then arrive at the second
term on the RHS of (\ref{eq89}).(\ref{eq106a})

The proof of (\ref{eq93}) is similar. For $1\leq x<x_{1}$ ($\zeta_{1}%
<\zeta\leq0)$ we write this as
\begin{equation}%
\begin{array}
[c]{l}%
\int_{x}^{\infty}{\left(  {\dfrac{\zeta\left(  t\right)  }{1-t^{2}}}\right)
^{1/2}\dfrac{\mathrm{Ai}^{2}\left(  {\nu^{2/3}\zeta\left(  t\right)  }\right)
+\mathrm{Bi}^{2}\left(  {\nu^{2/3}\zeta\left(  t\right)  }\right)  }{t}dt}\\
=\int_{x_{1}}^{\infty}{\left(  {\dfrac{\zeta\left(  t\right)  }{1-t^{2}}%
}\right)  ^{1/2}\dfrac{\mathrm{Ai}^{2}\left(  {\nu^{2/3}\zeta\left(  t\right)
}\right)  +\mathrm{Bi}^{2}\left(  {\nu^{2/3}\zeta\left(  t\right)  }\right)
}{t}dt}\\
+\int_{\zeta_{1}}^{\zeta}{\left(  {\dfrac{\eta}{1-x\left(  \eta\right)  ^{2}}%
}\right)  \left\{  {\mathrm{Ai}^{2}\left(  {\nu^{2/3}\eta}\right)
+\mathrm{Bi}^{2}\left(  {\nu^{2/3}\eta}\right)  }\right\}  d\eta},
\end{array}
\label{eq108}%
\end{equation}
with no such splitting being necessary if $-\infty<\zeta\leq\zeta_{1}$
($x_{1}\leq x<\infty)$. Now using
\[
\mathrm{Ai}^{2}\left(  {\nu^{2/3}\zeta}\right)  +\mathrm{Bi}^{2}\left(
{\nu^{2/3}\zeta}\right)  =\frac{1}{\pi\nu^{1/3}\left(  {-\zeta}\right)
^{1/2}}\left\{  {1+O\left(  {\frac{1}{\nu^{2}\zeta^{3}}}\right)  }\right\}
\quad\left(  {\nu^{2/3}\zeta\rightarrow-\infty}\right)  ,
\]
we find for the first integral on the RHS of (\ref{eq108}) that%
\[%
\begin{array}
[c]{l}%
\int_{x_{1}}^{\infty}{\left(  {\dfrac{\zeta\left(  t\right)  }{1-t^{2}}%
}\right)  ^{1/2}\dfrac{\mathrm{Ai}^{2}\left(  {\nu^{2/3}\zeta\left(  t\right)
}\right)  +\mathrm{Bi}^{2}\left(  {\nu^{2/3}\zeta\left(  t\right)  }\right)
}{t}dt}\\
=\dfrac{1}{\pi\nu^{1/3}}\left\{  {1+O\left(  {\dfrac{1}{\nu}}\right)
}\right\}  \int_{x_{1}}^{\infty}{\left(  {\dfrac{1}{t^{2}-1}}\right)
^{1/2}\dfrac{1}{t}dt}.
\end{array}
\]
Then with
\[
\int_{x_{1}}^{\infty}{\left(  {\frac{1}{t^{2}-1}}\right)  ^{1/2}\frac{1}{t}%
dt}=\sin^{-1}\left(  {\frac{1}{x_{1}}}\right)  ,
\]
we arrive at the first term on the RHS of (\ref{eq93}).

In the case when splitting is necessary (i.e. $1\leq x<x_{1})$, we follow
(\ref{eq104}) and employ the Maclaurin expansion (\ref{eq88}) for the second
integral on the RHS of (\ref{eq108}), and the second term on the RHS of
(\ref{eq93}) is obtained.
\end{proof}

\bigskip Table 1 illustrates the results (\ref{eq89}), (\ref{eq90}),
(\ref{eq91a}) and (\ref{eq93}) numerically for the case $\nu=50$ and various
values of $x$. In this, we denote the relative errors $\eta_{l}\left(  {\nu
,x}\right)  $ ($l=1,2,3,4)$ by
\[
f_{l}\left(  {\nu,x}\right)  =g_{l}\left(  {\nu,x}\right)  \left\{
{1+\eta_{l}\left(  {\nu,x}\right)  }\right\}  ,
\]
where
\[
f_{1}\left(  {\nu,x}\right)  =\int_{x}^{1}{\left(  {\frac{\zeta\left(
t\right)  }{1-t^{2}}}\right)  ^{1/2}\frac{\mathrm{Ai}\left(  {\nu^{2/3}%
\zeta\left(  t\right)  }\right)  \mathrm{Bi}\left(  {\nu^{2/3}\zeta\left(
t\right)  }\right)  }{t}dt},
\]%
\[%
\begin{array}
[c]{l}%
g_{1}\left(  {\nu,x}\right)  =\sum\limits_{k=0}^{2}{a_{k}\left[  {G_{1,2}%
^{k}\left(  {\nu,\min\left\{  {\zeta,\zeta_{0}}\right\}  }\right)
-G_{1,2}^{k}\left(  {\nu,0}\right)  }\right]  }\\
+\dfrac{1}{2\pi\nu^{1/3}}\max\left\{  {\cosh^{-1}\left(  {\dfrac{1}{x}%
}\right)  -\cosh^{-1}\left(  {\dfrac{1}{x_{0}}}\right)  ,0}\right\}  ,
\end{array}
\]%
\[
f_{2}\left(  {\nu,x}\right)  =\int_{0}^{x}{\left(  {\frac{\zeta\left(
t\right)  }{1-t^{2}}}\right)  ^{1/2}\frac{\mathrm{Ai}^{2}\left(  {\nu
^{2/3}\zeta\left(  t\right)  }\right)  }{t}dt},
\]%
\[
g_{2}\left(  {\nu,x}\right)  =-\left(  {\frac{\zeta}{1-x^{2}}}\right)
G_{1,1}^{0}\left(  {\nu,\zeta}\right)  ,
\]%
\[
f_{3}\left(  {\nu,x}\right)  =\int_{x}^{1}{\left(  {\frac{\zeta\left(
t\right)  }{1-t^{2}}}\right)  ^{1/2}\frac{\mathrm{Bi}^{2}\left(  {\nu
^{2/3}\zeta\left(  t\right)  }\right)  }{t}dt},
\]%
\[
g_{3}\left(  {\nu,x}\right)  =\left[  {\left(  {\frac{\zeta}{1-x^{2}}}\right)
G_{2,2}^{0}\left(  {\nu,\zeta}\right)  -a_{0}G_{2,2}^{0}\left(  {\nu
,0}\right)  }\right]  ,
\]%
\[
f_{4}\left(  {\nu,x}\right)  =\int_{x}^{\infty}{\left(  {\frac{\zeta\left(
t\right)  }{1-t^{2}}}\right)  ^{1/2}\frac{\mathrm{Ai}^{2}\left(  {\nu
^{2/3}\zeta\left(  t\right)  }\right)  +\mathrm{Bi}^{2}\left(  {\nu^{2/3}%
\zeta\left(  t\right)  }\right)  }{t}dt},
\]
and
\[%
\begin{array}
[c]{l}%
g_{4}\left(  {\nu,x}\right)  =\dfrac{1}{\pi\nu^{1/3}}\sin^{-1}\left(
{\dfrac{1}{\max\left\{  {x,x_{1}}\right\}  }}\right) \\
+\max\left\{  {\sum\limits_{k=0}^{2}{a_{k}\left[  {G_{1,1}^{k}\left(
{\nu,\zeta}\right)  +G_{2,2}^{k}\left(  {\nu,\zeta}\right)  -G_{1,1}%
^{k}\left(  {\nu,\zeta_{1}}\right)  -G_{2,2}^{k}\left(  {\nu,\zeta_{1}%
}\right)  }\right]  },0}\right\}  .
\end{array}
\]
All calculations were performed with MAPLE, with the integrals $f_{l}\left(
{\nu,x}\right)  $ being evaluated numerically using Simpson's method.%

\begin{tabular}
[c]{ccccc}%
$x$ & $\left\vert {\eta_{1}\left(  {50,x}\right)  }\right\vert $ & $\left\vert
{\eta_{2}\left(  {50,x}\right)  }\right\vert $ & $\left\vert {\eta_{3}\left(
{50,x}\right)  }\right\vert $ & $\left\vert {\eta_{4}\left(  {50,x}\right)
}\right\vert $\\
$0.1$ & $1.6240E-04$ & $3.1202E-03$ & $3.1466E-03$ & -\\
$0.5$ & $3.4440E-04$ & $6.9778E-03$ & $7.2109E-03$ & -\\
$0.75$ & $2.1710E-04$ & $1.0326E-02$ & $1.1859E-02$ & -\\
$0.99$ & $1.7825E-08$ & $2.0756E-02$ & $1.8467E-02$ & -\\
$1$ & - & - & - & $2.6086E-04$\\
$5$ & - & - & - & $6.2709E-07$\\
$10$ & - & - & - & $1.1871E-07$%
\end{tabular}

\bigskip

\textit{Table 1.}

\textbf{Acknowledgement}. I thank the referees for a number of helpful
suggestions and comments.

\end{document}